\newcolumntype{d}[1]{D{.}{.}{#1}}
\newfont{\bb}{msbm10}
\newtheorem{example}{Example}[section]
\newtheorem{thm}{Theorem}[section]
\newtheorem{remark}{Remark}[section]
\newcommand{\Frac}{\displaystyle\frac}
\newcommand{\iu}{{\rm i}}
\begin{document}
\cleardoublepage \pagestyle{myheadings}

\bibliographystyle{plain}

\title{Lopsided HSS Iterative Method and Preconditioner for a class of Complex Symmetric Linear System}

\author{Yusong Zhang\\
{\it School of Mathematical Sciences}\\
{\it Shanghai Jiao Tong University}\\
{\it 800 Dongchuan Road}\\
{\it Shanghai 200240, P.R.China}\\
{\it Email: zhangyusong\_1@sjtu.edu.cn}\\[2mm]
 Zeng-Qi Wang \footnote{This research was supported
  	by National Key Research and Development Program of China, 2020YFA0709803;\\ General program of Shanghai Natural Science Foundation, 23ZR1433900.}\\
{\it School of Mathematical Sciences}\\
{\it and Ministry of Education Key Lab}\\
{\it of Scientific and Engineering
Computing}\\
{\it Shanghai Jiao Tong University}\\
{\it 800 Dongchuan Road}\\
{\it Shanghai 200240, P.R.China}\\
{\it Email: wangzengqi@sjtu.edu.cn } }

\maketitle

\markboth{\small Y. S. Zhang, Z.-Q. Wang} {\small LHSS method for complex symmetric linear system}

\begin{abstract}
In this study, we propose the lopsided HSS (LHSS) iteration method for solving a class of complex symmetric  indefinite systems of linear equations. This method employs an alternating iterative scheme, where each iteration entails solving two systems of equations with symmetric real coefficient matrices. This design is intended to reduce the high computational costs associated with complex arithmetic. Theoretical analysis shows that the upper bound of the convergence rate depends only on the maximum and minimum eigenvalues of the real symmetric matrices, as well as the iteration parameters. When the eigenvalues satisfy certain conditions, the method guarantees convergence for any positive iteration parameter. Building on this insight, we developed the preconditioned lopsided HSS iteration method (PLHSS). Theoretical results demonstrate that PLHSS exhibits superior convergence properties compared to the original method. Additionally, we derived the optimal parameters for the new approaches and  corresponding optimal  convergence rate.  Furthermore, we derive the PLHSS preconditioner on the basis of the iterative method. The eigenvalues of the preconditioned matrix are well-clustered, and the eigenvectors are orthogonal with a specific inner product. Numerical experiments demonstrate the efficiency of the preconditioned GMRES and COCG methods.   LHSS iteration methods and the relevant preconditioners show mesh size independent and parameter-insensitive convergence behavior for the test numerical examples.

{\bf Keywords:} Complex symmetric linear system, Hermitian and skew Hermitian splitting, preconditioner, surface acoustic waves equations.
\end{abstract}

\section{Introduction}

We consider the complex symmetric  linear system of equations of the form
\begin{align}\label{equ:system}
    Ax=b,~
\end{align}
where the coefficient matrix $A = W+\mathrm{i}T\in\mathbb{C}^{n\times n}$ is a nonsingular complex matrix with $W\in\mathbb{R}^{n\times n}$ being symmetric positive definite and $T\in\mathbb{R}^{n\times n}$ being symmetric indefinite matrix. We assume $T\neq 0$, which implies that  $A $ is non-Hermitian.  As widely recognized, the system (\ref{equ:system}) is prevalent across a multitude of physical application domains, highlighting its fundamental importance in areas such as computational acoustics, electromagnetics (including antennas, microwaves, and optics), seismic imaging, quantum mechanics simulations, and oscillatory heat transfer/diffusion systems \cite{asoustics,electromagnetics,Tomography,diffusion}. This system arises from the discretization of partial differential equations that model wave propagation and time-harmonic vibrations. Key examples include the Helmholtz equations \cite{Helmholtz} and the frequency-domain Maxwell's equations \cite{Maxwell}.

Over the past years, the solution of systems (\ref{equ:system}) and their equivalent variants have motivated diverse classes of iteration methods, which is well-known as matrix splitting iteration methods.
One of the most classical matrix splitting iterative methods, Hermitian/Skew-Hermitian Splitting (HSS) method \cite{HSS},  stands out by explicitly exploiting the Hermitian/skew-Hermitian structure to achieve faster convergence.  The modified HSS (MHSS) method  \cite{MHSS} was proposed to reduce the computational complexity caused by complex arithmetic.  It  was proven to converge when the coefficient matrices the real part $W$ and imaginary part $T$  are symmetric positive definite and symmetric positive semidefinite, respectively.  This idea inspired a series of variant methods, such as  PMHSS \cite{PMHSS} and LPMHSS \cite{LPMHSS} mthods,  all of which have convergence guarantees under the condition of symmetric positive definite  or semi-definite $W$ and $T$. Several algorithms have also been designed to solve the case when $T$ is indefinite, but they impose strict constraints on the indefinite nature of $T$. For example, the condition $-W\prec T\preceq W$ is regarded as a necessary prerequisite in \cite{Xu-generalization,Cao-Ren-variants} and the implicit condition that $\varepsilon W+T$ is a positive definite matrix for some $\varepsilon$ in \cite{VPMHSS}. These conditions cannot be easily implemented in engineering applications, especially for problems with ill-conditioned matrices. However, the positive definiteness assumption, while sufficient for convergence, isn't always required. For instance, study  \cite{PMHSSPDEC} has shown that PMHSS can achieve convergence even with positive a semidefinite $W$ and an indefinite matrix T   when applied to certain problems.  These methods are also widely applied to solving a special class of block two-by-two real linear systems of  equations \cite{PMHSS-NS, ChebPMHSS,TBTAML}. 

Matrix splitting iterative methods can provide high-quality preconditioners for ill-conditioned systems of linear algebraic equations, thereby enhancing the efficiency of Krylov subspace iteration methods. An effective preconditioner not only tightens the clustering of eigenvalues but also improves the condition number of the eigenvectors. The classical Krylov subspace methods for complex symmetric linear systems include the long-recurrence methods, such as generalized minimal residual (GMRES) method \cite{GMRES} and the short-recurrence methods, such as conjugate orthogonal conjugate gradient (COCG) method \cite{COCG}, orthogonal conjugate residual (COCR) method \cite{COCR} and so on.

This study aims to develop a type of lopsided HSS iterative  methods  for the complex symmetric, partially indefinite systems of the form (\ref{equ:system}).  We establish the convergence results for the proposed LHSS iterative method and its preconditioned variants even $T$ is symmetric indefinite. 
The resulting methods overcome the fundamental limitation of existing approaches that require $T$ to be at least positive semidefinite, while maintaining comparable computational complexity of solving real subsystems.  Iteration methods naturally lead to a preconditioner for the complex matrix $A$. The preconditioning process primarily utilizes real matrix operations, demonstrating superior performance in preconditioned Krylov subspace methods.

The organization of the paper is as follows. In Section \ref{sec:2}, the LHSS iteration method is described and its convergence properties are described. In section \ref{sec:3}, we discuss a preconditioned variant of LHSS iteration and its convergence, which is also guaranteed by the relaxation parameter $\alpha$. In Section \ref{sec:4}, We derive the preconditioner corresponding to the preconditioned LHSS iteration method and provide an analysis from the perspective of the eigenvalues and eigenvectors of the preconditioned coefficient matrix. Numerical results are given in Section \ref{sec:5} to show the effectiveness of (preconditioned) LHSS iteration method as well as the corresponding LHSS preconditioner.

\section{Lopsided HSS iteration method}\label{sec:2}
For the system (\ref{equ:system}) composed of a symmetric positive definite matrix $W$ and a symmetric indefinite matrix $T$, we develop the lopsided HSS (LHSS) iterative method. The format of LHSS iteration method is described as follows:
\begin{align}\label{equ:iter}
    \begin{cases}
    \begin{split}
        \left(\alpha I +W\right)&x^{\left(k+\frac{1}{2}\right)} = \left(\alpha I-\mathrm{i}T\right)x^{\left(k\right)} + b\\
        T &x^{\left(k+1\right)} = \mathrm{i}W x^{\left(k+\frac{1}{2}\right)}-\mathrm{i}b
    \end{split},~k=0,1,2,\cdots,
    \end{cases}
\end{align}
where $\alpha>0$ is a fixed positive real number and  $x^{(0)}\in\mathbb{C}^{n}$ is arbitrarily  chosen initial vector. By  eliminating  the intermediate terms $x^{\left(k+\frac{1}{2}\right)}$ , we reformulate the LHSS iterative scheme as
\begin{align*}
    x^{\left(k+1\right)} = G\left(\alpha\right) x^{\left(k\right)} + B\left(\alpha\right)b,~k=0,1,2,\cdots,
\end{align*}
where
\begin{align}\label{equ:itermatrix}
    G\left(\alpha\right) = \mathrm{i}T^{-1}W\left(\alpha I +W\right)^{-1}\left(\alpha I-\mathrm{i}T\right),
\end{align}
and
\begin{align*}
    B\left(\alpha\right)  = \mathrm{i}T^{-1}W(\alpha I+W)^{-1}-\mathrm{i}T.
\end{align*}
$G\left(\alpha\right)$ in (\ref{equ:itermatrix}) is referred to as the iterative matrix.

\begin{algorithm}[htbp]
	\renewcommand{\algorithmicrequire}{\textbf{Input:}}
	\renewcommand{\algorithmicensure}{\textbf{Output:}}
	\caption{Lopsided HSS iteration method}
	\label{alg:LHSS}
	\begin{algorithmic}[1]
		\Require $b$ , $\alpha$, $x^{(0)}$, $Iter$ ; % input
		\Ensure $solution$; % output

		% for loop
		\For {$i=1,2,\cdots,Iter$} until converge
		\State Compute $h = \left(\alpha I-\mathrm{i}T\right)x^{\left(k\right)} + b$
		\State Solve $ \left(\alpha I +W\right)x^{\left(k+\frac{1}{2}\right)} = h$
		\State Compute $g = \mathrm{i}W x^{\left(k+\frac{1}{2}\right)}-\mathrm{i}b$
		\State Solve $T x^{\left(k+1\right)} = g $
		\EndFor
		\State $solution = x^{(i)}$
		% return
		\State \Return $solution$
	\end{algorithmic}
\end{algorithm}

Denoted by  $0<\lambda_{1}\le\lambda_{2}\le\cdots\le\lambda_{n}:=\lambda_{\max}$ and $\mu_{1}\le \mu_{2}\le \cdots\le\mu_{k}<0<\mu_{k+1}\le\cdots\le\mu_{n}$ the eigenvalues of the matrices $W$ and $T$, respectively. Particularly, $\mu_{\min}$ denotes the smallest eigenvalue of  $T$  in magnitude,  i.e., $\mu_{\min}=\min\lbrace |\mu_{k}|,|\mu_{k+1}|\rbrace$. We  analyze the convergent behavior of the proposed LHSS method by the spectral radius of the iterative matrix $G(\alpha)$.
\begin{thm}\label{thm:iter} 
    Let $A=W+\mathrm{i}T$, with symmetric positive definite matrix $W\in\mathbb{R}^{n\times n}$ and symmetric indefinite matrix $T\in\mathbb{R}^{n\times n}$.  $\lambda_{\max}$ and $\mu_{\min}$ are largest and smallest eigenvalues of  $W$ and $T$ in magnitude, respectively.      Then, the  spectral radius of iterative matrix $G(\alpha)$ in  (\ref{equ:itermatrix})   satisfies $\rho\left(G\left(\alpha\right)\right)\le\sigma\left(\alpha\right)$ for any positive $\alpha$, where
    \begin{align}\label{equ:upbound}
        \sigma\left(\alpha\right)=\frac{\lambda_{\max}}{\alpha+\lambda_{\max}} \cdot \frac{\sqrt{\alpha^{2}+\mu_{\min}^{2}}}{\mu_{\min}}.
    \end{align}
    Furthermore, it holds that \\
    (i) If $\lambda_{\max}\le\mu_{\min}$, then $\sigma(\alpha)<1$  for any $\alpha>0$, which means LHSS iteration scheme (\ref{equ:iter}) is convergent unconditionally;\\
    (ii) If $\lambda_{\max}>\mu_{\min}$, then $\sigma(\alpha)<1$ if and only if
    \begin{align}\label{equ:conditionalconvergence}
        \alpha\in\mathcal{D}_{I}:=\left(0,\frac{2\lambda_{\max}\mu_{\min}^{2}}{\lambda_{\max}^{2}-\mu_{\min}^{2}}\right),
    \end{align}
 i.e., the iteration method (\ref{equ:iter}) is convergent if   condition (\ref{equ:conditionalconvergence}) is true.
\begin{proof}[Proof]
Since $W$ and $T$ are symmetric, there exists orthogonal matrices $P$ and $Q$, such that $P^{T}WP=\Lambda_{W},~Q^{T}TQ=\Lambda_{T}$, where
    \begin{align*}
        \Lambda_{W}=\mathrm{diag}\left(\lambda_{1},\lambda_{2},\cdots,\lambda_{n}\right),~
        \Lambda_{T}=\mathrm{diag}\left(\mu_{1},\mu_{2},\cdots,\mu_{n}\right).
    \end{align*}
    It holds that 
    \begin{align*}
        \rho\left(G\left(\alpha\right)\right)
        &=\rho\left(\mathrm{i}T^{-1}W\left(\alpha I +W\right)^{-1}\left(\alpha I-\mathrm{i}T\right)\right)\\
        &=\rho\left(
        W\left(\alpha I +W\right)^{-1}\left(\alpha I-\mathrm{i}T\right)T^{-1}\right)\\
        &\le\left\|
        W\left(\alpha I +W\right)^{-1}\left(\alpha I-\mathrm{i}T\right)T^{-1}\right\|_{2}\\
        &\le\left\|
        W\left(\alpha I +W\right)^{-1}\right\|_{2}\left\|\left(\alpha I-\mathrm{i}T\right)T^{-1}\right\|_{2}\\
        & \le   \left\|
        \Lambda_{W}\left(\alpha I +\Lambda_{W}\right)^{-1}\right\|_{2}\left\|\left(\alpha I-\mathrm{i}\Lambda_{T}\right)\Lambda_{T}^{-1}\right\|_{2}\\
        &=\mathop{\max}\limits_{1\le j\le n}\frac{\lambda_{j}}{\alpha+\lambda_{j}} \cdot \mathop{\max}\limits_{1\le j\le n}\sqrt{\frac{\alpha^{2}+\mu_{j}^{2}}{\mu_{j}^{2}}}.
    \end{align*}
    Furthermore, each component of this estimation formula can be expressed in the following form
    \begin{align*}
    	\mathop{\max}\limits_{1\le j\le n}\frac{\lambda_{j}}{\alpha+\lambda_{j}}=\frac{\lambda_{\max}}{\alpha+\lambda_{\max}},
    \end{align*}
	and
	\begin{align*}
		\mathop{\max}\limits_{1\le j\le n}\sqrt{\frac{\alpha^{2}+\mu_{j}^{2}}{\mu_{j}^{2}}}=\frac{\sqrt{\alpha^{2}+\mu^{2}_{\min}}}{\mu_{\min}},
    \end{align*}
    respectively. The upper bound of $\rho(G\left(\alpha\right))$ given in (\ref{equ:upbound}) is obtained directly.
	
Therefore,  upper bound $\sigma\left(\alpha\right)<1$ is equivalent to
    \begin{align}\label{equ:condition}
        \alpha\left(\lambda_{\max}^{2}-\mu^{2}_{\min}\right)-2\mu^{2}_{\min}\lambda_{\max}<0.
    \end{align}
   If $\lambda_{\max}^{2}\le\mu^{2}_{\min}$, then (\ref{equ:condition}) holds true for any positive constant $\alpha>0$, i.e., the LHSS iteration (\ref{equ:iter}) converges to the unique solution of the system (\ref{equ:system}); if $\lambda_{\max}^{2}>\mu^{2}_{\min}$, then the LHSS iteration (\ref{equ:iter}) converges to the unique solution of the system (\ref{equ:system}) if and only if $\alpha$ satisfies (\ref{equ:conditionalconvergence}). 
\end{proof}
\end{thm}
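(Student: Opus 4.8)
The plan is to bound the spectral radius by the spectral norm and then use the symmetry of $W$ and $T$ to reduce the estimate to two scalar maximizations over eigenvalues. First I would invoke the cyclic invariance of the spectral radius, $\rho(BC)=\rho(CB)$, to rewrite the iteration matrix as
\[
\rho(G(\alpha)) = \rho\bigl(W(\alpha I + W)^{-1}(\alpha I - \mathrm{i}T)T^{-1}\bigr),
\]
which regroups the product into one factor depending only on $W$ and one depending only on $T$. Passing from $\rho(\cdot)$ to $\|\cdot\|_2$ and applying submultiplicativity then splits this into $\|W(\alpha I + W)^{-1}\|_2 \cdot \|(\alpha I - \mathrm{i}T)T^{-1}\|_2$. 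These two relaxations are where all the slack in the bound is introduced; no sharpness is claimed.

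The key observation is that each factor is normal. Indeed $W(\alpha I + W)^{-1}$ is real symmetric, and $(\alpha I - \mathrm{i}T)T^{-1} = \alpha T^{-1} - \mathrm{i}I$ is a function of the symmetric matrix $T$, hence commutes with its conjugate transpose. For a normal matrix the spectral norm equals the largest modulus of its eigenvalues, so diagonalizing $W = P\Lambda_W P^T$ and $T = Q\Lambda_T Q^T$ yields eigenvalues $\lambda_j/(\alpha+\lambda_j)$ and $(\alpha - \mathrm{i}\mu_j)/\mu_j$ for the two factors. Thus the norms equal $\max_j \lambda_j/(\alpha + \lambda_j)$ and $\max_j \sqrt{\alpha^2/\mu_j^2 + 1}$. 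Since $\lambda \mapsto \lambda/(\alpha+\lambda)$ increases on $(0,\infty)$, the first maximum is attained at $\lambda_{\max}$; since $\sqrt{1 + \alpha^2/\mu^2}$ decreases in $|\mu|$, the second is attained at the smallest-magnitude eigenvalue $\mu_{\min}$. Multiplying gives the upper bound $\sigma(\alpha)$ in (\ref{equ:upbound}).

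For the convergence dichotomy I would analyze $\sigma(\alpha)<1$ directly. Since both factors of $\sigma(\alpha)$ are positive, squaring preserves the inequality, and clearing denominators reduces $\sigma(\alpha)^2 < 1$ to $\lambda_{\max}^2(\alpha^2 + \mu_{\min}^2) < (\alpha + \lambda_{\max})^2 \mu_{\min}^2$. Expanding and cancelling the common term $\lambda_{\max}^2\mu_{\min}^2$ leaves $\alpha^2(\lambda_{\max}^2 - \mu_{\min}^2) < 2\alpha\lambda_{\max}\mu_{\min}^2$, and dividing by $\alpha>0$ gives precisely condition (\ref{equ:condition}). If $\lambda_{\max}\le\mu_{\min}$ the coefficient $\lambda_{\max}^2-\mu_{\min}^2$ is nonpositive and the inequality holds for every $\alpha>0$, which is case (i); if $\lambda_{\max}>\mu_{\min}$ the coefficient is positive, and solving the linear inequality for $\alpha$ produces the interval $\mathcal{D}_{I}$ of case (ii). The only subtlety in the whole argument is verifying the normality of the second factor so that its spectral norm collapses to the clean eigenvalue maximum; everything else is routine algebra.
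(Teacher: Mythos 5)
Your proposal is correct and follows essentially the same route as the paper: rewrite $\rho(G(\alpha))$ by cyclic invariance, bound it by the spectral norm, split via submultiplicativity into a $W$-factor and a $T$-factor, evaluate each norm through the eigenvalues, and reduce $\sigma(\alpha)<1$ to the same linear inequality in $\alpha$. Your explicit normality argument for the factor $(\alpha I-\mathrm{i}T)T^{-1}$ is a welcome clarification of a step the paper leaves implicit (it passes to $\Lambda_{W}$ and $\Lambda_{T}$ by orthogonal similarity, which yields the same equalities), but it does not constitute a different method.
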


Theorem \ref{thm:iter} illustrates the range of parameter  that enables the convergence of the LHSS iterative method (\ref{equ:iter}). The following theorem provides a quasi-optimal parameter  by minimizing the upper bound of the spectral radius. 
\begin{thm}\label{thm:optimal}
Under the assumptions of Theorem \ref{thm:iter},  the quasi optimal parameter $\alpha^{*}=\frac{\mu^{2}_{\min}}{\lambda_{\max}}$  minimizes the upper bound $\sigma(\alpha)$ (\ref{equ:upbound}), i.e. 
	\begin{align}\label{equ:optimal-speed}
	\min\limits_{\alpha>0}  \sigma(\alpha)	=\sigma\left(\alpha^{*}\right)=\frac{\lambda_{\max}}{\sqrt{\lambda_{\max}^{2}+\mu_{\min}^{2}}} .
	\end{align}
    \begin{proof}[proof]
        Notice that  $\sigma(\alpha)>0$  is continuous with respect to $\alpha$, and 
    \begin{align*}
        \sigma^{2}(\alpha)=\frac{\lambda_{\max}^{2}}{\mu^{2}_{\min}}\cdot \frac{\alpha^{2}+\mu^{2}_{\min}}{\left(\alpha+\lambda_{\max}\right)^{2}}.
    \end{align*}
    Then we get 
    \begin{align*}
        \frac{\mathrm{d}}{\mathrm{d}\alpha}\sigma^{2}(\alpha)=\frac{2\alpha\lambda_{\max}-2\mu^{2}_{\min}}{\left(\alpha+\lambda_{\max}\right)^{3}}.
    \end{align*}
    The square of upper bound function $\sigma^{2}(\alpha)$ is monotonically decreasing on the interval $\left[0,\frac{\mu^{2}_{\min}}{\lambda_{\max}}\right]$ and monotonically increasing on the interval $\left[\frac{\mu^{2}_{\min}}{\lambda_{\max}},+\infty\right)$. Hence,   $\sigma(\alpha)$  reaches its minimum value at $\alpha^{*}=\frac{\mu^{2}_{\min}}{\lambda_{\max}}$. Then
    \begin{align*}        \rho\left(G\left(\alpha^{*}\right)\right)\le\sigma\left(\alpha^{*}\right)=\frac{\lambda_{\max}}{\sqrt{\mu^{2}_{\min}+\lambda_{\max}^{2}}}<1.
    \end{align*}
    The last inequality holds because of the non-singularity of matrix $T$, from which we obtain $\mu_{\min}\neq 0$. 
    \end{proof}
\end{thm}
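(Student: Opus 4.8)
The plan is to reduce the minimization of $\sigma(\alpha)$ to an elementary single-variable calculus problem. Since $\sigma(\alpha)$ is strictly positive for every $\alpha>0$, the squaring map is strictly increasing on the range of $\sigma$, so $\sigma$ and $\sigma^2$ attain their minimum at the same point. I would therefore work with the squared bound
\begin{align*}
\sigma^2(\alpha)=\frac{\lambda_{\max}^2}{\mu_{\min}^2}\cdot\frac{\alpha^2+\mu_{\min}^2}{(\alpha+\lambda_{\max})^2},
\end{align*}
which removes the square root from (\ref{equ:upbound}) and makes differentiation routine.

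Next I would differentiate. Pulling out the constant factor $\lambda_{\max}^2/\mu_{\min}^2$ and applying the quotient rule to the remaining quotient, the two $\alpha^2$ contributions in the numerator cancel, leaving
\begin{align*}
\frac{\mathrm{d}}{\mathrm{d}\alpha}\sigma^2(\alpha)=\frac{\lambda_{\max}^2}{\mu_{\min}^2}\cdot\frac{2\alpha\lambda_{\max}-2\mu_{\min}^2}{(\alpha+\lambda_{\max})^3}.
\end{align*}
For $\alpha>0$ the denominator is positive, so the sign of the derivative is governed entirely by the linear numerator $\alpha\lambda_{\max}-\mu_{\min}^2$. This vanishes exactly at $\alpha^{*}=\mu_{\min}^2/\lambda_{\max}$, is negative for $\alpha<\alpha^{*}$, and positive for $\alpha>\alpha^{*}$.

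This sign pattern shows $\sigma^2$ is strictly decreasing on $(0,\alpha^{*}]$ and strictly increasing on $[\alpha^{*},\infty)$, so $\alpha^{*}$ is the \emph{unique global} minimizer rather than a mere local extremum or a boundary value. Confirming globality is the one logical point a hasty argument might skip, and it is settled cleanly by the monotonicity just described. Finally I would substitute $\alpha^{*}$ back into (\ref{equ:upbound}): computing $\alpha^{*}+\lambda_{\max}=(\mu_{\min}^2+\lambda_{\max}^2)/\lambda_{\max}$ and $(\alpha^{*})^2+\mu_{\min}^2=\mu_{\min}^2(\mu_{\min}^2+\lambda_{\max}^2)/\lambda_{\max}^2$, the factors telescope to give $\sigma(\alpha^{*})=\lambda_{\max}/\sqrt{\lambda_{\max}^2+\mu_{\min}^2}$, matching (\ref{equ:optimal-speed}). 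That this value is strictly below $1$ is immediate from $\mu_{\min}^2>0$, which holds because the nonsingularity of $T$ forces $\mu_{\min}\neq 0$.

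There is no substantial obstacle in this argument; it is a textbook optimization. The only steps deserving genuine (if light) justification are the reduction from $\sigma$ to $\sigma^2$ and the verification that the interior critical point is the global minimizer, both of which I would state explicitly rather than take for granted.
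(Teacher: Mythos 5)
Your proposal is correct and follows essentially the same route as the paper: minimize $\sigma^{2}(\alpha)$ by differentiation, identify the critical point $\alpha^{*}=\mu_{\min}^{2}/\lambda_{\max}$, use the monotonicity on either side to conclude it is the global minimizer, and substitute back to obtain $\sigma(\alpha^{*})=\lambda_{\max}/\sqrt{\lambda_{\max}^{2}+\mu_{\min}^{2}}<1$ via $\mu_{\min}\neq 0$. The only difference is cosmetic — you retain the constant prefactor $\lambda_{\max}^{2}/\mu_{\min}^{2}$ in the derivative, which the paper silently drops; since it is positive, the sign analysis is unaffected either way.
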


\section{The Preconditioned LHSS iteration method}\label{sec:3}
In this section, we  use the preconditioning to  accelerate the original LHSS iterative method by applying an effective preconditioner $V$ to precondition the system (\ref{equ:system}). Let $V\in\mathbb{R}^{n\times n}$ be a symmetric positive definite matrix. Based on the notations
\begin{align*}
    \tilde{W}=V^{-\frac{1}{2}}WV^{-\frac{1}{2}},~\tilde{T}=V^{-\frac{1}{2}}TV^{-\frac{1}{2}},~\tilde{A}=V^{-\frac{1}{2}}AV^{-\frac{1}{2}},
\end{align*}
and
\begin{align*}
    \tilde{x}=V^{\frac{1}{2}}x,~\tilde{b}=V^{-\frac{1}{2}}b,
\end{align*}
we can equally reformulate (\ref{equ:system}) to 
\begin{align}\label{equ:Psystem}
    \tilde{A}\tilde{x}=\left(\tilde{W}+\mathrm{i}\tilde{T}\right)\tilde{x}=\tilde{b}.
\end{align}

Now we are in the position of applying LHSS iteration method (\ref{equ:iter}) to preconditioned system (\ref{equ:Psystem}). In this case, we acquire  for solving the system (\ref{equ:system}) with 
and preconditioned LHSS (PLHSS) iteration method is derived as
\begin{align}\label{equ:Piter}
    \begin{cases}
    \begin{split}
        \left(\alpha V +W\right)&x^{\left(k+\frac{1}{2}\right)} = \left(\alpha V-\mathrm{i}T\right)x^{\left(k\right)} + b\\
        T &x^{\left(k+1\right)} = \mathrm{i}W x^{\left(k+\frac{1}{2}\right)}-\mathrm{i}b,
    \end{split}
    \end{cases}
\end{align}
which is a special case of LHSS iteration method (\ref{equ:iter}) when $V=I$. The PLHSS iteration method could also be rewritten as standard iterative scheme 
\begin{align}
    x^{\left(k+1\right)} = G\left(V;\alpha\right) x^{\left(k\right)} + B\left(V;\alpha\right)b,~k=0,1,2\cdots,
\end{align}
with the notations
\begin{align}
    G\left(V;\alpha\right) = \mathrm{i}T^{-1}W\left(\alpha V +W\right)^{-1}\left(\alpha V-\mathrm{i}T\right),
\end{align}
and
\begin{align}
    B\left(V;\alpha\right)  = \mathrm{i}T^{-1}W(\alpha V+W)^{-1}-\mathrm{i}T.
\end{align}

It is worth noting that if we take the preconditioner as $V=I$, $G\left(V;\alpha\right)$ and $B\left(V;\alpha\right)$ degenerate to $G\left(\alpha\right)$ and $B\left(\alpha\right)$, respectively. It is equivalent to keeping the original problem unpreconditioned.

\begin{algorithm}[htbp]
	\renewcommand{\algorithmicrequire}{\textbf{Input:}}
	\renewcommand{\algorithmicensure}{\textbf{Output:}}
	\caption{Preconditioned LHSS iteration method}
	\label{alg:PreLHSS}
	\begin{algorithmic}[1]
		\Require $b$ , $\alpha$, $x^{(0)}$, $Iter$ ; % input
		\Ensure $solution$; % output
		
		% for loop
		\For {$i=1,2,\cdots,Iter$} until converge
		\State Compute $h = \left(\alpha V-\mathrm{i}T\right)x^{\left(k\right)} + b$
		\State Solve $\left(\alpha V +W\right)x^{\left(k+\frac{1}{2}\right)} = h$
		\State Compute $g = \mathrm{i}W x^{\left(k+\frac{1}{2}\right)}-\mathrm{i}b$
		\State Solve $T x^{\left(k+1\right)} = g $
		\EndFor
		\State $solution = x^{(i)}$
		% return
		\State \Return $solution$
	\end{algorithmic}
\end{algorithm}

Suppose $0<\tilde{\lambda}_{1}\le \tilde{\lambda}_{2}\le \cdots\le \tilde{\lambda}_{n}:=\tilde{\lambda}_{\max}$ represent the eigenvalues of $V^{-1}W$ and $\tilde{\mu}_{1}\le \tilde{\mu}_{2}\le  \cdots\le\tilde{\mu}_{k}<0<\tilde{\mu}_{k+1}\le\cdots\le\tilde{\mu}_{n}$ represent the eigenvalues of $V^{-1}T$, respectively. In particular, $\tilde{\mu}_{\min}$ is the eigenvalue of $V^{-1}T$  whose absolute value is closest to zero. Namely, $\tilde{\mu}_{\min}=\min\lbrace |\tilde{\mu}_{k}|,|\tilde{\mu}_{k+1}|\rbrace$.  It should be noticed that $V^{-1}W$ is in a similar relationship with $\tilde{W}$ and $V^{-1}T$ is similar to $\tilde{T}$ so that they share the same eigenvalues pairwise.

\begin{thm}\label{thm:Piter}
	Let complex symmetric matrix $A=W+\mathrm{i}T$, with symmetric positive definite matrix $W\in\mathbb{R}^{n\times n}$ and symmetric indefinite matrix $T\in\mathbb{R}^{n\times n}$, and  $V\in\mathbb{R}^{n\times n}$ be an symmetric positive definite matrix.  
  $\tilde{\lambda}_{\max}$ and $\tilde{\mu}_{\min}$ are largest and smallest eigenvalues of  $V^{-1}W$ and $V^{-1}T$ in magnitude, respectively.      Then
the spectral radius of iterative matrix corresponding to PLHSS iteration method (\ref{equ:Piter}) satisfies $\rho\left(G\left(V;\alpha\right)\right)\le\tilde{\sigma}\left(\alpha\right) $ for any positive $\alpha$, where
	\begin{align}\label{equ:Pupbound}
		\tilde{\sigma}\left(\alpha\right)=\frac{\tilde{\lambda}_{\max}}{\alpha+\tilde{\lambda}_{\max}} \cdot \frac{\sqrt{\alpha^{2}+\tilde{\mu}_{\min}^{2}}}{\tilde{\mu}_{\min}},
	\end{align}
	Moreover, it can be concluded as follows.\\
    (i) If $\tilde{\lambda}_{\max}\le\tilde{\mu}_{\min}$, then  $\tilde{\sigma}(\alpha)<1$ for any $\alpha>0$. The PLHSS iteration method (\ref{equ:Piter}) is actually unconditionally convergent.\\
    (ii) If $\tilde{\lambda}_{\max}>\tilde{\mu}_{\min}$, then  $\tilde{\sigma}(\alpha)<1$  holds once the condition 
    \begin{align}\label{equ:Pconditionalconvergence}
        \alpha\in\mathcal{D}_{V}:=\left(0,\frac{2\tilde{\lambda}_{\max}\tilde{\mu}_{\min}^{2}}{\tilde{\lambda}_{\max}^{2}-\tilde{\mu}_{\min}^{2}}\right)
    \end{align}
    is satisfied. That is to say, the PLHSS iteration method (\ref{equ:Piter}) is convergent under the condition (\ref{equ:Pconditionalconvergence}).
    \begin{proof}[Proof]
    	It is by using direct calculation method that  the following estimation is obtained
        \begin{align*}
            \rho\left(G\left(V;\alpha\right)\right)&=\rho\left(\mathrm{i}T^{-1}W\left(\alpha V +W\right)^{-1}\left(\alpha V-\mathrm{i}T\right)\right)\\
            &=\rho\left(\mathrm{i}V^{-\frac{1}{2}}\tilde{T}^{-1}\tilde{W}\left(\alpha I +\tilde{W}\right)^{-1}\left(\alpha I-\mathrm{i}\tilde{T}\right)V^{\frac{1}{2}}\right)\\
            &=\rho\left(\mathrm{i}\tilde{W}\left(\alpha I +\tilde{W}\right)^{-1}\left(\alpha I-\mathrm{i}\tilde{T}\right)\tilde{T}^{-1}\right)\\
            &\le\left|\left|\mathrm{i}\tilde{W}\left(\alpha I +\tilde{W}\right)^{-1}\left(\alpha I-\mathrm{i}\tilde{T}\right)\tilde{T}^{-1}\right|\right|_{2}\\
            &\le\left|\left|\tilde{W}\left(\alpha I +\tilde{W}\right)^{-1}\right|\right|_{2}\left|\left|\left(\alpha I-\mathrm{i}\tilde{T}\right)\tilde{T}^{-1}\right|\right|_{2}.
        \end{align*}
	Analogous to the proof of Theorem \ref{thm:iter}, we have
    \begin{align*}
        \rho\left(G\left(V;\alpha\right)\right)\le \mathop{\max}\limits_{1\le j\le n}\frac{\tilde{\lambda}_{j}}{\alpha+\tilde{\lambda}_{j}} \cdot \mathop{\max}\limits_{1\le j\le n}\sqrt{\frac{\alpha^{2}+\tilde{\mu}_{j}^{2}}{\tilde{\mu}_{j}^{2}}}.
    \end{align*}
    Substitute
    \begin{align*}
	    \mathop{\max}\limits_{1\le j\le n}\frac{\tilde{\lambda}_{j}}{\alpha+\tilde{\lambda}_{j}}=\frac{\tilde{\lambda}_{\max}}{\alpha+\tilde{\lambda}_{\max}}
    \end{align*}
    and
    \begin{align*}
    	\mathop{\max}\limits_{1\le j\le n}\sqrt{\frac{\alpha^{2}+\tilde{\mu}_{j}^{2}}{\tilde{\mu}_{j}^{2}}}=\frac{\sqrt{\alpha^{2}+\tilde{\mu}_{\min}^{2}}}{\tilde{\mu}_{\min}}
    \end{align*}
    into the above inequality, hence the spectral radius of the iteration matrix $G\left(V;\alpha\right)$ is bounded by
    \begin{align*}
        \rho(G\left(V;\alpha\right))&\le\left|\left|\tilde{W}\left(\alpha I +\tilde{W}\right)^{-1}\right|\right|_{2}\left|\left|\left(\alpha I-\mathrm{i}\tilde{T}\right)\tilde{T}^{-1}\right|\right|_{2}\\
        &\le \frac{\tilde{\lambda}_{\max}}{\alpha+\tilde{\lambda}_{\max}} \cdot \frac{\sqrt{\alpha^{2}+\tilde{\mu}^{2}_{\min}}}{\tilde{\mu}_{\min}}:=\tilde{\sigma}(\alpha),
    \end{align*}
    which leads to (\ref{equ:Pupbound}).

    Next, consider the equivalent condition for $\tilde{\sigma}\left(\alpha\right)<1$, i.e.,
    \begin{align}\label{equ:Pcondition}
        \alpha\left(\tilde{\lambda}_{\max}^{2}-\tilde{\mu}_{\min}^{2}\right)-2\tilde{\mu}_{\min}^{2}\tilde{\lambda}_{\max}<0.
    \end{align}
  Evidently, the condition (\ref{equ:Pcondition}) is true for any positive number $\alpha>0$ provided $\tilde{\lambda}_{\max}^{2}\le\tilde{\mu}_{min}^{2}$ and we conclude the PLHSS iteration method (\ref{equ:Piter}) is unconditionly convergent to the unique solution of (\ref{equ:system}) correspondingly. If the given eigenvalues meet the relationship $\tilde{\lambda}_{\max}^{2}>\tilde{\mu}_{min}^{2}$, then we reach the conclusion that the iteration method (\ref{equ:Piter}) converges to the solution of (\ref{equ:system}) if and only if the parameter $\alpha$ complies with (\ref{equ:Pconditionalconvergence}).
    \end{proof}
\end{thm}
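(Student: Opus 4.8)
The plan is to reduce the preconditioned iteration matrix $G(V;\alpha)$ to exactly the structure already analyzed in Theorem \ref{thm:iter}, by means of the symmetric scaling $V^{1/2}$. First I would factor the constituent blocks as $\alpha V + W = V^{1/2}(\alpha I + \tilde{W})V^{1/2}$ and $\alpha V - \mathrm{i}T = V^{1/2}(\alpha I - \mathrm{i}\tilde{T})V^{1/2}$, together with $W = V^{1/2}\tilde{W}V^{1/2}$ and $T^{-1} = V^{-1/2}\tilde{T}^{-1}V^{-1/2}$. Substituting these into $G(V;\alpha) = \mathrm{i}T^{-1}W(\alpha V + W)^{-1}(\alpha V - \mathrm{i}T)$ and telescoping the interior factors $V^{\pm 1/2}$ shows that $G(V;\alpha)$ is similar, via $V^{1/2}$, to $\mathrm{i}\tilde{T}^{-1}\tilde{W}(\alpha I + \tilde{W})^{-1}(\alpha I - \mathrm{i}\tilde{T})$. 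Since similar matrices share their spectrum, the spectral radius is unchanged by this transformation.

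Second, I would observe that $\tilde{W}$ and $\tilde{T}$ are themselves real symmetric (congruences of the symmetric matrices $W$ and $T$ by $V^{-1/2}$), with $\tilde{W}$ positive definite and $\tilde{T}$ indefinite and nonsingular. Crucially, $\tilde{W} = V^{-1/2}WV^{-1/2}$ is similar to $V^{-1}W$, and $\tilde{T}$ is similar to $V^{-1}T$, so the eigenvalues of $\tilde{W}$ and $\tilde{T}$ coincide with those of $V^{-1}W$ and $V^{-1}T$. In particular, $\tilde{\lambda}_{\max}$ and $\tilde{\mu}_{\min}$ defined through $V^{-1}W$ and $V^{-1}T$ are the appropriate extreme eigenvalues, and the pair $(\tilde{W},\tilde{T})$ now plays exactly the role of $(W,T)$ in Theorem \ref{thm:iter}.

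Third, I would transcribe the estimate from Theorem \ref{thm:iter} onto the scaled matrices: use the cyclic invariance of the spectral radius to move $\tilde{T}^{-1}$ to the right, bound $\rho$ by the $2$-norm, apply submultiplicativity to split off $\tilde{W}(\alpha I + \tilde{W})^{-1}$ and $(\alpha I - \mathrm{i}\tilde{T})\tilde{T}^{-1}$, and diagonalize each symmetric factor orthogonally. The two resulting maxima, $\max_j \tilde{\lambda}_j/(\alpha + \tilde{\lambda}_j) = \tilde{\lambda}_{\max}/(\alpha + \tilde{\lambda}_{\max})$ and $\max_j \sqrt{(\alpha^2 + \tilde{\mu}_j^2)/\tilde{\mu}_j^2} = \sqrt{\alpha^2 + \tilde{\mu}_{\min}^2}/\tilde{\mu}_{\min}$, yield the bound $\tilde{\sigma}(\alpha)$ in (\ref{equ:Pupbound}).

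Finally, the convergence dichotomy follows by solving $\tilde{\sigma}(\alpha) < 1$, which after squaring and clearing denominators is equivalent to the inequality (\ref{equ:Pcondition}) that is linear in $\alpha$; the two cases $\tilde{\lambda}_{\max}^2 \le \tilde{\mu}_{\min}^2$ and $\tilde{\lambda}_{\max}^2 > \tilde{\mu}_{\min}^2$ give unconditional convergence and the interval $\mathcal{D}_V$ respectively, exactly paralleling Theorem \ref{thm:iter}. I expect the only genuinely delicate step to be the first one: carefully tracking the $V^{\pm 1/2}$ factors so that the similarity to the Theorem \ref{thm:iter} form is exact and the scaled matrices are manifestly symmetric. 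Once that reduction is secured, the remainder is a faithful repetition of the earlier argument with $(\tilde{W},\tilde{T})$ in place of $(W,T)$.
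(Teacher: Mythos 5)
Your proposal is correct and follows essentially the same route as the paper's own proof: factor out $V^{\pm 1/2}$ so that $G(V;\alpha)$ is similar to $\mathrm{i}\tilde{T}^{-1}\tilde{W}(\alpha I+\tilde{W})^{-1}(\alpha I-\mathrm{i}\tilde{T})$, then repeat the Theorem \ref{thm:iter} argument (cyclic permutation, $2$-norm bound, submultiplicativity, orthogonal diagonalization) for the symmetric pair $(\tilde{W},\tilde{T})$, and finish with the same linear-in-$\alpha$ inequality for the dichotomy. The only cosmetic difference is that you make the telescoping of the $V^{\pm1/2}$ factors and the inertia/eigenvalue identification with $V^{-1}W$ and $V^{-1}T$ explicit, which the paper states without detail.
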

Consequently, we suggest  an quasi optimal parameter $\alpha$ and the corresponding convergence rate with respect to special choice of the preconditioner $V$.
\begin{thm}\label{thm:Poptimal}
	Under the assumption of Theorem \ref{thm:Piter}, the optimal parameter $\tilde{\alpha}^{*}=\frac{\tilde{\mu}^{2}_{\min}}{\tilde{\lambda}_{\max}}$  minimizes the upper bound (\ref{equ:Pupbound}), and the PLHSS iterative scheme (\ref{equ:Piter}) with $\alpha^{*}$ converges to the solution of (\ref{equ:system}).
    \end{thm}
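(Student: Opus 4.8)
The plan is to mirror the argument of Theorem \ref{thm:optimal} line by line, simply replacing the unpreconditioned spectral data $\lambda_{\max}$ and $\mu_{\min}$ by the preconditioned quantities $\tilde{\lambda}_{\max}$ and $\tilde{\mu}_{\min}$. Theorem \ref{thm:Piter} already supplies the upper bound $\rho(G(V;\alpha))\le\tilde{\sigma}(\alpha)$ with $\tilde{\sigma}$ as in (\ref{equ:Pupbound}), so the task reduces to a scalar minimization of $\tilde{\sigma}(\alpha)$ over $\alpha>0$, followed by a check that the minimal value lies strictly below $1$.

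First I would pass to $\tilde{\sigma}^{2}(\alpha)$ instead of $\tilde{\sigma}(\alpha)$, since squaring clears the radical while preserving the location of the minimizer (both being positive functions of $\alpha$). Writing
\begin{align*}
    \tilde{\sigma}^{2}(\alpha)=\frac{\tilde{\lambda}_{\max}^{2}}{\tilde{\mu}_{\min}^{2}}\cdot\frac{\alpha^{2}+\tilde{\mu}_{\min}^{2}}{\left(\alpha+\tilde{\lambda}_{\max}\right)^{2}},
\end{align*}
I would differentiate to get
\begin{align*}
    \frac{\mathrm{d}}{\mathrm{d}\alpha}\tilde{\sigma}^{2}(\alpha)=\frac{\tilde{\lambda}_{\max}^{2}}{\tilde{\mu}_{\min}^{2}}\cdot\frac{2\left(\alpha\tilde{\lambda}_{\max}-\tilde{\mu}_{\min}^{2}\right)}{\left(\alpha+\tilde{\lambda}_{\max}\right)^{3}}.
\end{align*}
The leading factor is positive, so the sign is controlled entirely by $\alpha\tilde{\lambda}_{\max}-\tilde{\mu}_{\min}^{2}$; thus $\tilde{\sigma}^{2}$ decreases on $(0,\tilde{\alpha}^{*}]$ and increases on $[\tilde{\alpha}^{*},+\infty)$, with the unique critical point at $\tilde{\alpha}^{*}=\tilde{\mu}_{\min}^{2}/\tilde{\lambda}_{\max}$. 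Substituting $\tilde{\alpha}^{*}$ back then yields the minimal value
\begin{align*}
    \tilde{\sigma}\left(\tilde{\alpha}^{*}\right)=\frac{\tilde{\lambda}_{\max}}{\sqrt{\tilde{\lambda}_{\max}^{2}+\tilde{\mu}_{\min}^{2}}}.
\end{align*}

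To close the convergence claim I would note that $\tilde{\mu}_{\min}\neq 0$, because $T$ is nonsingular and hence so is $V^{-1}T$ (which is similar to $\tilde{T}$); therefore the denominator strictly exceeds $\tilde{\lambda}_{\max}$ and $\tilde{\sigma}(\tilde{\alpha}^{*})<1$. Together with $\rho(G(V;\tilde{\alpha}^{*}))\le\tilde{\sigma}(\tilde{\alpha}^{*})$ this forces the spectral radius below $1$, giving convergence. A useful remark is that this conclusion is indifferent to the case split in Theorem \ref{thm:Piter}: even in case (ii), where convergence is only conditional, the minimizer $\tilde{\alpha}^{*}$ automatically falls inside $\mathcal{D}_{V}$, since the inequality $\tilde{\alpha}^{*}<2\tilde{\lambda}_{\max}\tilde{\mu}_{\min}^{2}/(\tilde{\lambda}_{\max}^{2}-\tilde{\mu}_{\min}^{2})$ collapses to $\tilde{\lambda}_{\max}^{2}+\tilde{\mu}_{\min}^{2}>0$, which always holds. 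I do not expect a genuine obstacle here: the whole argument is elementary single-variable calculus, and the only point requiring a moment of care is justifying the strict inequality $\tilde{\sigma}(\tilde{\alpha}^{*})<1$ through the nonsingularity of $T$, exactly as was done at the end of Theorem \ref{thm:optimal}.
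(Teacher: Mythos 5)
Your proposal is correct and is essentially the paper's own proof: the paper simply states that the argument of Theorem \ref{thm:optimal} carries over verbatim, which is exactly the calculus on $\tilde{\sigma}^{2}(\alpha)$ and the nonsingularity check ($\tilde{\mu}_{\min}\neq 0$) that you carry out with the preconditioned quantities. Your additional observation that $\tilde{\alpha}^{*}$ always lies in $\mathcal{D}_{V}$ is a correct (if redundant) cross-check.
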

    \begin{proof}[proof]
        The proof is the same as that of Theorem \ref{thm:optimal}.
    \end{proof}

 There is a nature choice of $V = W$. We analysis the iterative method with the specific cases. The iterative scheme    (\ref{equ:Piter}) is reorganized  as
\begin{equation}\label{equ:PiterW}
 T x^{\left(k+1\right)} = \Frac{1}{\alpha +1} (\iu \alpha W+T)x^{(k)} -\frac{\alpha \iu}{\alpha +1}b.
\end{equation}
    The iterative matrix is 
    \begin{align}\label{equ:PLHSSWmatrix}
        G\left(W;\alpha\right) = \frac{1}{\alpha+1}\left(\mathrm{i}\alpha T^{-1}W+I\right).
    \end{align}
 This scheme significantly reduces the computational workload by halving the effort required to solve a linear system at each iteration. 
    \begin{thm}
Let $A=W+\mathrm{i}T$ with symmetric positive definite matrix $W\in\mathbb{R}^{n\times n}$ and symmetric indefinite matrix $T\in\mathbb{R}^{n\times n}$. $\xi_{\max}$ is the largest eigenvalue of $T^{-1}W$ in magnitude. Then the spectral radius of iteration matrix $G\left(W;\alpha\right)$ in (\ref{equ:PLHSSWmatrix}) satisfies 
    \begin{align*}
        \rho\left(G\left(W;\alpha\right)\right)= \frac{1}{\alpha+1}\sqrt{\alpha^{2}\xi_{\max}^{2}+1}.
    \end{align*}
    In addition, \\
    (i) If $\left|\xi_{\max}\right|\le 1$, then $\rho\left(G\left(W;\alpha\right)\right)<1$ holds for any $\alpha>0$. The  iterative scheme (\ref{equ:PiterW}) method  converges unconditionally.\\
    (ii) If $\left|\xi_{\max}\right|> 1$, then $\rho\left(G\left(W;\alpha\right)\right)<1$ provided
    \begin{align}\label{equ:Wcondition}
        \alpha \in \mathcal{D}_{W}:=\left(0,\frac{2}{\xi_{\max}^{2}-1}\right).
    \end{align}    \end{thm}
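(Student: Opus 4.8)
The plan is to compute the spectrum of $G(W;\alpha)$ \emph{exactly} rather than merely bound it, exploiting the explicit affine form (\ref{equ:PLHSSWmatrix}). Since $G(W;\alpha)=\frac{1}{\alpha+1}(\mathrm{i}\alpha T^{-1}W+I)$ is a first-degree polynomial in $T^{-1}W$, its eigenvalues are precisely $\frac{1}{\alpha+1}(1+\mathrm{i}\alpha\xi)$ as $\xi$ ranges over the eigenvalues of $T^{-1}W$. Hence the entire problem reduces to understanding the spectrum of $T^{-1}W$, and in particular to pinning down the modulus of $1+\mathrm{i}\alpha\xi$.

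The crucial observation, which I expect to be the real content of the proof, is that every eigenvalue $\xi$ of $T^{-1}W$ is real and nonzero. First I would exhibit the similarity $W^{1/2}(T^{-1}W)W^{-1/2}=W^{1/2}T^{-1}W^{1/2}$, whose right-hand side is a real symmetric matrix, being congruent to the symmetric matrix $T^{-1}$ through the symmetric factor $W^{1/2}$ (this is where the standing assumption that $W$ is symmetric positive definite, so that $W^{1/2}$ exists and is symmetric, enters). Real symmetry forces all $\xi$ to be real, and the nonsingularity of $T$ forces them to be nonzero, since $W^{1/2}T^{-1}W^{1/2}$ is then invertible. This reality is exactly the feature that upgrades the earlier upper-bound estimates in Theorems \ref{thm:iter} and \ref{thm:Piter} to an exact equality in the present special case $V=W$.

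With $\xi\in\mathbb{R}$ in hand, the modulus of each eigenvalue of $G(W;\alpha)$ is $\frac{1}{\alpha+1}\lvert 1+\mathrm{i}\alpha\xi\rvert=\frac{1}{\alpha+1}\sqrt{1+\alpha^{2}\xi^{2}}$, a quantity strictly increasing in $\lvert\xi\rvert$. Consequently the maximum over the spectrum is attained at $\xi=\xi_{\max}$, yielding $\rho(G(W;\alpha))=\frac{1}{\alpha+1}\sqrt{\alpha^{2}\xi_{\max}^{2}+1}$, which is the claimed identity.

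Finally, for the two convergence regimes I would impose $\rho(G(W;\alpha))<1$ and square: $\sqrt{1+\alpha^{2}\xi_{\max}^{2}}<\alpha+1$ is equivalent to $1+\alpha^{2}\xi_{\max}^{2}<(\alpha+1)^{2}$, which after dividing by $\alpha>0$ collapses to $\alpha(\xi_{\max}^{2}-1)<2$. When $\lvert\xi_{\max}\rvert\le 1$ the left-hand side is nonpositive, so the inequality holds for every $\alpha>0$, giving the unconditional convergence of case (i); when $\lvert\xi_{\max}\rvert>1$ it rearranges to $\alpha<\frac{2}{\xi_{\max}^{2}-1}$, which is exactly the interval $\mathcal{D}_{W}$ in (\ref{equ:Wcondition}), giving case (ii). The only genuine difficulty is the spectral-reality argument in the second paragraph; once that is secured, the remaining steps are elementary algebra.
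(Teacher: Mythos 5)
Your proof is correct and complete. Note that the paper states this theorem without any proof (it appears right after (\ref{equ:PLHSSWmatrix}) and is followed directly by Remark \ref{rmk:optalpW}), so your argument fills a genuine omission rather than paralleling an existing one; the route you take --- the spectrum of $G(W;\alpha)=\frac{1}{\alpha+1}\left(\mathrm{i}\alpha T^{-1}W+I\right)$ is the affine image $\frac{1}{\alpha+1}\left(1+\mathrm{i}\alpha\xi\right)$ of the spectrum of $T^{-1}W$, which is real and nonzero by the similarity $W^{\frac{1}{2}}\left(T^{-1}W\right)W^{-\frac{1}{2}}=W^{\frac{1}{2}}T^{-1}W^{\frac{1}{2}}$, followed by the algebra $\rho\left(G\left(W;\alpha\right)\right)<1\iff\alpha\left(\xi_{\max}^{2}-1\right)<2$ --- is exactly the device the paper uses implicitly here and explicitly for the $V=T$ analogue (the eigenvalue formula preceding Theorem \ref{thm:iterT}) and in Theorem 4.1, where the same symmetric matrix $Z=W^{\frac{1}{2}}T^{-1}W^{\frac{1}{2}}$ appears. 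In particular, your verification that the eigenvalues of $T^{-1}W$ are real is the one step the paper nowhere spells out, and it is precisely what turns the norm \emph{bounds} of Theorems \ref{thm:iter} and \ref{thm:Piter} into the exact \emph{equality} claimed in this special case.
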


\begin{remark} \label{rmk:optalpW}
The optimal parameter $\alpha^{*}_{W}=\xi_{\max}^{-2}$ minimizes the spectral radius $\rho\big(G(W;\alpha)\big)$. We notice
    \[
   | \xi_{\max}| =\mathop{max}_{\lambda\in sp(T^{-1}W)}|\lambda|\le \left|\frac{\lambda_{\max}}{\mu_{\min}} \right|, 
    \]
therefore,
	\begin{align*}
		\rho\big(G(W;\alpha^{*}_{W})\big)=\frac{1}{\sqrt{\xi_{\max}^{-2}+1}}\le\frac{1}{\sqrt{\frac{\mu_{\min}^{2}}{\lambda_{\max}^{2}}+1}}=\frac{\lambda_{\max}}{\sqrt{\lambda_{\max}^{2}+\mu_{\min}^{2}}}=\sigma\left(\alpha^{*}\right).
	\end{align*}
It indicates that the preconditioned LHSS iterative scheme (\ref{equ:PiterW}) is more efficient than LHSS iterative scheme (\ref{equ:iter}) they adopt their respective optimal parameters.
Furthermore,   it holds that
	\begin{align*}
		\frac{2\lambda_{\max}\mu_{\min}^{2}}{\lambda_{\max}^{2}-\mu_{\min}^{2}}\le \frac{2}{\xi_{\max}^{2}-1}, \quad \mbox{when} \quad  \lambda_{\max}\le 1 ,
	\end{align*}
thus $\mathcal{D}_{I}\subseteq\mathcal{D}_{W}$.   It indicates that  preconditioned LHSS iterative scheme has larger convergence domain in this case.
\end{remark}

The PLHSS iteration method (\ref{equ:Piter}) is well defined also when $V = T$,  and can be  rewritten as
\begin{align}\label{equ:PiterT}
\begin{cases}
    \begin{split}
        \left(\alpha T +W\right)&x^{\left(k+\frac{1}{2}\right)} = \left(\alpha-\mathrm{i}\right)Tx^{\left(k\right)} + b\\
        T &x^{\left(k+1\right)} = \mathrm{i}W x^{\left(k+\frac{1}{2}\right)}-\mathrm{i}b.
    \end{split}
    \end{cases}
\end{align}
The iterative matrix is 
\begin{align}\label{equ:PLHSSTmatrix}
G\left(T;\alpha\right) = \left(1+\mathrm{i}\alpha\right)T^{-1}W\left(\alpha T+W\right)^{-1}T.
\end{align}
Denote the eigenvalues of matrix $T^{-1}W$ by $\xi_{i}$,
\[
\xi_{\max}^{-} = \mathrm{min}\lbrace\xi_{i}<0 \rbrace,  \qquad \xi_{\max}^{+} = \mathrm{max}\lbrace\xi_{i}>0\rbrace, \qquad i=1,2,\cdots,n
\]
 Then the eigenvalues of matrix $G\left(T;\alpha\right)$ can be expressed as 
\begin{align*}
	\lambda\left(G\left(T;\alpha\right)\right) = (1+\mathrm{i}\alpha)\left(\frac{\alpha}{\xi_{i}}+1\right)^{-1},~i=1,2,\cdots,n .
\end{align*}
 The convergence behavior is analyzed as follows.
\begin{thm}\label{thm:iterT}
Let $A=W+\mathrm{i}T$ with sysmetric positive definite matrix $W\in\mathbb{R}^{n\times n}$ and symmetric indefinite matrix $T\in\mathbb{R}^{n\times n}$.  Then the spectral radius of iteration matrix (\ref{equ:PLHSSTmatrix}) corresponding to PLHSS iteration method (\ref{equ:PiterT}) could be written as
    \begin{equation} \label{eqn:rho_GT}
    \rho\left(G\left(T;\alpha\right)\right)=\sqrt{1+\alpha^{2}}\max\limits _{1\leq i\leq n}\frac{\left|\xi_{i}\right|}{\left|\alpha+\xi_{i}\right|}.
            \end{equation}      
    Suppose that $\xi_{\max}^{-}\in(-1,0)$, it holds that\\  
    (i) If $\xi_{\max}^{+}\le 1$, then $\rho\left(G\left(T;\alpha\right)\right)<1$ provided 
    \begin{align*}
        \alpha\in\left(\frac{-2\xi_{\max}^{-}}{1-(\xi_{\max}^{-})^{2}},+\infty\right);
    \end{align*}\\
    (ii) If $\xi_{\max}^{+}> 1$, then $\rho\left(G\left(T;\alpha\right)\right)<1$ as long as  $\xi_{\max}^{-}\xi_{\max}^{+}>-1$ and
    \begin{align*}
        \alpha\in\left(\frac{-2\xi_{\max}^{-}}{1-(\xi_{\max}^{-})^{2}},\frac{2\xi_{\max}^{+}}{(\xi_{\max}^{+})^{2}-1}\right).
    \end{align*}
    \begin{proof}
        To ensure
        \begin{align*}
            \rho\left(G\left(T;\alpha\right)\right)=\sqrt{1+\alpha^{2}}\max\limits _{1\leq i\leq n}\frac{\left|\xi_{i}\right|}{\left|\alpha+\xi_{i}\right|}<1,
        \end{align*}
    it must hold that  
    \begin{equation*}
  (1+\alpha^{2}) \xi_{i}^{2} < (\alpha+\xi_{i})^{2}, \quad \mbox{for any}  ~\xi_{i},
          \end{equation*}
 which is equivalent to
\begin{equation}\label{alphaxi}
\alpha(\xi_{i}^2-1)<2\xi_{i}.    
\end{equation}
For any $\xi_i \le 1$, (\ref{alphaxi}) holds true. For any $\xi_{i}> 1$,  (\ref{alphaxi}) indicates that 
        \[
        \alpha<\dfrac{2\xi_{\max}^{+}}{(\xi_{\max}^{+})^{2}-1}\leq \dfrac{2\xi_{i}}{(\xi_{i})^{2}-1}.
        \] 
For any $-1<\xi_i < 0$, it also holds that $1 - \xi_i^2 < 0$. Inequality (\ref{alphaxi}) indicates that 
        \[
            \alpha >  \frac{-2\xi_{\max}^{-}}{1-(\xi_{\max}^{-})^{2}} \geq \frac{-2\xi_i}{1- \xi_i^2} . 
        \]
        Therefore, we discuss the following cases:
 \begin{itemize}
        \item[(i)] When $\xi_{\max}^{+} \le 1$,
        \[
            \alpha >   \frac{-2\xi_{\max}^{-}}{1-(\xi_{\max}^{-})^{2}};
        \]
        \item[(ii)]  When $\xi_{\max}^{+}>1$,   $\alpha$ should satisfies
        \[
            \alpha > \frac{2\xi_i}{ \xi_i^2-1} \text{ for } -1<\xi_i < 0, \quad \mbox{and } \quad  \alpha<\frac{2\xi_i}{\xi_i^2-1} \text{ for } \xi_i
            >1.
        \]
        It equivalent that
        \[
        \alpha<\dfrac{2\xi_{\max}^{+}}{(\xi_{\max}^{+})^{2}-1}, \quad \mbox{and} \quad   \alpha >   \frac{-2\xi_{\max}^{-}}{1-(\xi_{\max}^{-})^{2}}.
        \] 
        The  $\alpha$ exists  if $\xi_{\max}^{-}\xi_{\max}^{+}>-1$.
            \end{itemize}
    \end{proof}
\end{thm}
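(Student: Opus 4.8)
The plan is to work directly from the eigenvalue expression $\lambda(G(T;\alpha)) = (1+\mathrm{i}\alpha)\left(\alpha/\xi_i + 1\right)^{-1}$ recorded just above the theorem, which I would first confirm by observing that $G(T;\alpha)$ is similar, via conjugation by $T$, to $(1+\mathrm{i}\alpha)W(\alpha T + W)^{-1}$. The eigenvalues of the latter are tied to the generalized problem $Wv = \xi Tv$ through $\eta = \xi/(\alpha+\xi)$, so the eigenvalues of $G(T;\alpha)$ are $(1+\mathrm{i}\alpha)\xi_i/(\alpha+\xi_i)$. Taking moduli and using $|1+\mathrm{i}\alpha| = \sqrt{1+\alpha^2}$ immediately yields the closed form for the spectral radius in (\ref{eqn:rho_GT}); since the $\xi_i$ are the real eigenvalues of $T^{-1}W$ and $\alpha>0$ keeps every denominator nonzero, the maximum is well defined.

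The second step is to convert the requirement $\rho(G(T;\alpha))<1$ into a family of scalar inequalities, one per eigenvalue. Squaring gives $(1+\alpha^2)\xi_i^2 < (\alpha+\xi_i)^2$; expanding, cancelling the common $\xi_i^2$, and dividing by $\alpha>0$ reduces this to the linear-in-$\alpha$ condition $\alpha(\xi_i^2-1) < 2\xi_i$ for every $i$. The whole problem then becomes: for which $\alpha>0$ does this hold simultaneously over all $\xi_i$?

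I would next split by the position of $\xi_i$ relative to the critical values $\pm 1$, where the sign of the coefficient $\xi_i^2-1$ changes. For $0\le\xi_i\le 1$ the left side is nonpositive and the right side nonnegative, so the inequality is automatic. For $\xi_i>1$ the coefficient is positive and the constraint becomes an upper bound $\alpha < 2\xi_i/(\xi_i^2-1)$; since $\xi\mapsto 2\xi/(\xi^2-1)$ is decreasing on $(1,\infty)$, the binding constraint is the largest positive eigenvalue $\xi_{\max}^{+}$. For $-1<\xi_i<0$ the coefficient is negative, so dividing flips the inequality into a lower bound $\alpha > -2\xi_i/(1-\xi_i^2)$; as this expression increases in $|\xi_i|$ on $(-1,0)$, the binding constraint is the most negative eigenvalue $\xi_{\max}^{-}$. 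The hypothesis $\xi_{\max}^{-}\in(-1,0)$ is precisely what excludes any $\xi_i<-1$, for which the positive coefficient against a negative right side would force failure for all $\alpha>0$.

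Assembling the cases gives the result: when $\xi_{\max}^{+}\le 1$ only the lower bound survives, producing the stated half-line. When $\xi_{\max}^{+}>1$ both a lower and an upper bound are active, and the admissible set is $\left(-2\xi_{\max}^{-}/(1-(\xi_{\max}^{-})^2),\; 2\xi_{\max}^{+}/((\xi_{\max}^{+})^2-1)\right)$. The main obstacle I expect is the final nonemptiness check: I must verify this interval is nonempty exactly when $\xi_{\max}^{-}\xi_{\max}^{+}>-1$. Writing $a=-\xi_{\max}^{-}\in(0,1)$ and $b=\xi_{\max}^{+}>1$, the condition ``lower bound $<$ upper bound'' cross-multiplies, both denominators being positive, to $ab(a+b)<a+b$, i.e. $ab<1$, which is exactly $\xi_{\max}^{-}\xi_{\max}^{+}>-1$; the care needed lies in tracking signs through the division by $\xi_i^2-1$ and ensuring the cross-multiplication preserves the inequality direction.
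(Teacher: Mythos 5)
Your proof is correct and follows essentially the same route as the paper's: reduce $\rho\left(G\left(T;\alpha\right)\right)<1$ to the scalar inequalities $\alpha\left(\xi_{i}^{2}-1\right)<2\xi_{i}$, split cases at $\xi_{i}=\pm 1$, and identify the binding constraints at $\xi_{\max}^{-}$ and $\xi_{\max}^{+}$ via monotonicity of $\xi\mapsto 2\xi/\left(\xi^{2}-1\right)$. You also fill in two details the paper leaves implicit --- the similarity derivation of the eigenvalue formula behind (\ref{eqn:rho_GT}) and the explicit cross-multiplication showing the admissible interval is nonempty exactly when $\xi_{\max}^{-}\xi_{\max}^{+}>-1$ --- and both check out.
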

  \begin{thm}\label{thm:optalphaT}
Denoted by $\Theta = \left(\xi_{\max}^{-}\right)^{-1}+\left(\xi_{\max}^{+}\right)^{-1}$. Based on the hypothesis in Theorem \ref{thm:iterT}, the optimal parameter 
\begin{align*}
    \alpha^{*}_{T}=
    \begin{cases}
        +\infty,& ~\Theta\ge 0,\\
        \mathrm{max}\lbrace \left(\xi_{\max}^{+}\right)^{-1},~-2\Theta^{-1}\rbrace,&~\Theta<0,
    \end{cases}
\end{align*}
minimizes the spectral radius $\rho\left(G\left(T;\alpha\right)\right)$ in (\ref{eqn:rho_GT}), and the PLHSS iteration scheme (\ref{equ:PiterT}) with $\alpha^{*}_{T}$ converges to the solution of (\ref{equ:system}).
\end{thm}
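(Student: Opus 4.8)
The plan is to turn the spectral-radius formula (\ref{eqn:rho_GT}) into a one-variable min--max problem governed solely by the two extreme ratios, and then minimise over $\alpha>0$. Write $p=\xi_{\max}^{+}>0$, $q=\xi_{\max}^{-}\in(-1,0)$, and $f_{\xi}(\alpha)=\sqrt{1+\alpha^{2}}\,|\xi|/|\alpha+\xi|$, so that $\rho(G(T;\alpha))=\max_{i}f_{\xi_{i}}(\alpha)$. First I would restrict to $\alpha>|q|$: this is harmless because for $0<\alpha\le|q|$ one has $f_{q}(\alpha)=\sqrt{1+\alpha^{2}}\,|q|/(|q|-\alpha)\ge 1$, so $\rho\ge 1$ and no minimiser can occur there, while the entire convergence window of Theorem \ref{thm:iterT} already lies in $\alpha>|q|$ since $|q|<2|q|/(1-q^{2})$. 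On this range the inner maximum collapses to the extreme eigenvalues: for $\xi>0$ the factor $\xi/(\alpha+\xi)=1-\alpha/(\alpha+\xi)$ increases in $\xi$, and for $\xi<0$ with $\alpha>|q|\ge|\xi|$ the factor $|\xi|/(\alpha+\xi)=t/(\alpha-t)$ with $t=|\xi|$ has positive derivative $\alpha/(\alpha-t)^{2}$; hence $\rho(G(T;\alpha))=\max\{f_{p}(\alpha),f_{q}(\alpha)\}$.

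I would then describe the two envelope curves by differentiating $f_{\xi}^{2}$ exactly as in the proof of Theorem \ref{thm:optimal}, obtaining $\frac{d}{d\alpha}f_{\xi}^{2}(\alpha)=2\xi^{2}(\alpha\xi-1)/(\alpha+\xi)^{3}$. For $p>0$ this makes $f_{p}$ decrease on $(0,p^{-1})$ and increase on $(p^{-1},\infty)$, with minimum $p/\sqrt{1+p^{2}}$ and limit $p$ at infinity, while for $q\in(-1,0)$ and $\alpha>|q|$ it makes $f_{q}$ strictly decreasing with limit $|q|$ at infinity. Solving $f_{p}(\alpha)=f_{q}(\alpha)$ cancels $\sqrt{1+\alpha^{2}}$ and reduces to the linear equation $p(\alpha+q)=-q(\alpha+p)$, whose unique root is $\alpha_{c}=2p|q|/(p-|q|)$; substituting $\Theta=q^{-1}+p^{-1}=(p+q)/(pq)$ identifies this root with $-2\Theta^{-1}$, and $\Theta\ge 0$ is equivalent to $p\le|q|$.

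Finally I would run the min--max analysis using that $f_{q}$ dominates near the left end (where $f_{q}\to\infty$) and $f_{p}$ dominates for large $\alpha$ (where $f_{p}\to p$ and $f_{q}\to|q|$ with $p>|q|$). When $\Theta\ge 0$ the crossing root $\alpha_{c}$ is non-positive, so $f_{q}>f_{p}$ throughout $\alpha>|q|$ and $\rho=f_{q}$ is strictly decreasing with infimum $|q|<1$ never attained, giving $\alpha_{T}^{*}=+\infty$. When $\Theta<0$ there is a genuine crossing at $\alpha_{c}>0$, with $\rho=f_{q}$ (decreasing) on $(|q|,\alpha_{c})$ and $\rho=f_{p}$ on $(\alpha_{c},\infty)$; if $\alpha_{c}\ge p^{-1}$ then $f_{p}$ increases beyond $\alpha_{c}$ and the minimum is at $\alpha_{c}=-2\Theta^{-1}$, whereas if $\alpha_{c}<p^{-1}$ then $f_{p}$ still decreases until $p^{-1}$ before increasing, so the minimum is at $p^{-1}$; both subcases collapse to $\alpha_{T}^{*}=\max\{p^{-1},-2\Theta^{-1}\}$. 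The main obstacle is precisely this bookkeeping: correctly identifying which of $f_{p},f_{q}$ realises the maximum on each sub-interval, and verifying that the proposed optimiser lies strictly inside the convergence window of Theorem \ref{thm:iterT} so that $\rho(G(T;\alpha_{T}^{*}))<1$ (when $p>1$ this last check uses the compatibility condition $pq>-1$ to keep $\alpha_{c}$ and $p^{-1}$ below the upper bound $2p/(p^{2}-1)$). Once the envelope $\max\{f_{p},f_{q}\}$ is set up, minimising it is a routine piecewise-monotonicity argument.
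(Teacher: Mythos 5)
Your proposal is correct and takes essentially the same route as the paper's own proof: both reduce $\rho\left(G\left(T;\alpha\right)\right)$ to the maximum of the two extreme-eigenvalue functions (your $f_{q},f_{p}$ are the paper's $\Xi_{1},\Xi_{2}$), establish their monotonicity by differentiation, identify the crossover at $\alpha=-2\Theta^{-1}$, and then split on the sign of $\Theta$. The only difference is bookkeeping: the paper subdivides the case $\Theta<0$ according to $\xi_{\max}^{+}\le 1$ versus $\xi_{\max}^{+}>1$ (the two convergence windows of Theorem \ref{thm:iterT}), whereas you subdivide according to the position of the crossing point relative to $\left(\xi_{\max}^{+}\right)^{-1}$ and then verify the minimizer lies in the convergence window; both yield $\alpha_{T}^{*}=\max\left\lbrace \left(\xi_{\max}^{+}\right)^{-1},-2\Theta^{-1}\right\rbrace$.
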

\begin{proof}
We notice that
\begin{align*}
    \alpha>\dfrac{-2\xi_{\max}^{-}}{1-(\xi_{\max}^{-})^{2}},~-1<\xi_{\max}^{-}<0
\end{align*}
is a necessary condition to promise the convergence of PLHSS iteration method with $V=T$ based on Theorem \ref{thm:iterT}. In this situation, we have
\begin{align*}
        -\alpha<\dfrac{2\xi_{\max}^{-}}{1-(\xi_{\max}^{-})^{2}}<\xi_{\max}^{-}\le\xi_{i}\le\xi_{\max}^{+},~i=1,2,\cdots,n,
    \end{align*}
    which implies
    \begin{align*}
        \dfrac{|\xi_{i}|}{\left|\alpha+\xi_{i}\right|}\le\dfrac{|\xi_{\max}^{-}|}{\left|\alpha+\xi_{\max}^{-}\right|} \quad \mathrm{or}\quad \dfrac{|\xi_{i}|}{\left|\alpha+\xi_{i}\right|}\le\dfrac{|\xi_{\max}^{+}|}{\left|\alpha+\xi_{\max}^{+}\right|}:=\Xi_{2}\left(\alpha\right),~i=1,2,\cdots,n.
    \end{align*}
    Therefore, the maximum value of $\sqrt{1+\alpha^{2}}\dfrac{|\xi_{i}|}{\left|\alpha+\xi_{i}\right|}$ is attained either at $\xi_{i}=\xi_{\max}^{-}$ or at $\xi_{i}=\xi_{\max}^{+}$. Furthermore, the following inequality holds true:
    \begin{align*}
        \dfrac{|\xi_{\max}^{-}|}{\left|\alpha+\xi_{\max}^{-}\right|}  >\dfrac{|\xi_{\max}^{+}|}{\left|\alpha+\xi_{\max}^{+}\right|},~\alpha\le-2\Theta^{-1},\\
        \dfrac{|\xi_{\max}^{-}|}{\left|\alpha+\xi_{\max}^{-}\right|}< \dfrac{|\xi_{\max}^{+}|}{\left|\alpha+\xi_{\max}^{+}\right|},~\alpha>-2\Theta^{-1}.
    \end{align*}
     Denote two functions on $(0,+\infty)$ as 
    \begin{align*}
    \Xi_{1}\left(\alpha\right):=\sqrt{1+\alpha^{2}}\dfrac{|\xi_{\max}^{-}|}{\left|\alpha+\xi_{\max}^{-}\right|}=\sqrt{1+\alpha^{2}}\dfrac{-\xi_{\max}^{-}}{\alpha+\xi_{\max}^{-}},\\
    \Xi_{2}\left(\alpha\right):=\sqrt{1+\alpha^{2}}\dfrac{|\xi_{\max}^{+}|}{\left|\alpha+\xi_{\max}^{+}\right|}=\sqrt{1+\alpha^{2}}\dfrac{\xi_{\max}^{+}}{\alpha+\xi_{\max}^{+}}.
    \end{align*}
    By calculating the derivative with respect to $\alpha$ gives
    \begin{align*}
        \frac{\mathrm{d} \Xi_{1}\left(\alpha\right)}{\mathrm{d}\alpha} = \frac{-\xi_{\max}^{-}\left(\alpha\xi_{\max}^{-}-1\right)}{\sqrt{1+\alpha^{2}}\left(\alpha+\xi_{\max}^{-}\right)^{2}},\quad \mbox{and}\quad 
        \frac{\mathrm{d} \Xi_{2}\left(\alpha\right)}{\mathrm{d}\alpha} = \frac{\xi_{\max}^{+}\left(\alpha\xi_{\max}^{+}-1\right)}{\sqrt{1+\alpha^{2}}\left(\alpha+\xi_{\max}^{+}\right)^{2}},
    \end{align*}
    which indicates $\Xi_{1}\left(\alpha\right)$ is monotonically decreasing for $\alpha>0$, while the function $\Xi_{2}\left(\alpha\right)$ is monotonically decreasing in the interval $\left(0,\left(\xi_{\max}^{+}\right)^{-1}\right)$ and monotonically increasing in the interval $\left(\left(\xi_{\max}^{+}\right)^{-1},+\infty\right)$.
    \begin{itemize}
        \item[(i)]   If $\Theta\ge 0$, then $\alpha>0>-2\Theta^{-1}$. Clearly this situation only occurs when $\xi_{\max}^{+}\le 1$. The spectral radius could be written explicitly as 
    \begin{align*}
        \rho\left(G\left(T;\alpha\right)\right) = \Xi_{1}\left(\alpha\right),~\alpha>\dfrac{-2\xi_{\max}^{-}}{1-(\xi_{\max}^{-})^{2}},
    \end{align*}
    which is monotonically decreasing within its domain. Therefore, $\rho\left(G\left(T;\alpha\right)\right)$ reaches its minimum value at $\alpha^{*}_{T}=+\infty$.
        \item[(ii)]  If $\Theta < 0$, we analyze $\rho\big(G(T;\alpha)\big)$ by considering two cases: $\xi_{\max}^{+} \le 1$ and $\xi_{\max}^{+} > 1$. Theorem \ref{thm:iterT} implies distinct ranges of $\alpha$ for each case.
        \begin{itemize}
            \item[(a)] If $\xi_{\max}^{+}\le 1$, then $\alpha>\dfrac{-2\xi_{\max}^{-}}{1-(\xi_{\max}^{-})^{2}}$. The spectral radius is
            \begin{align*}
        \rho\left(G\left(T;\alpha\right)\right) = 
        \begin{cases}
            \Xi_{1}\left(\alpha\right),~\dfrac{-2\xi_{\max}^{-}}{1-(\xi_{\max}^{-})^{2}}<\alpha\le-2\Theta^{-1},\\[10pt]
            \Xi_{2}\left(\alpha\right),~\alpha>-2\Theta^{-1}.
        \end{cases}
    \end{align*}
    In terms of the monotonicity of $\Xi_{1},\Xi_{2}$, we conclude $\alpha=-2\Theta^{-1}$ minimizes $\rho\left(G\left(T;\alpha\right)\right)$ on $\left(\dfrac{-2\xi_{\max}^{-}}{1-(\xi_{\max}^{-})^{2}},-2\Theta^{-1}\right]$, while $\alpha=\mathrm{max}\lbrace \left(\xi_{\max}^{+}\right)^{-1},~-2\Theta^{-1}\rbrace$ minimizes it on $\left(-2\Theta^{-1},+\infty\right)$. Thus, the global minimizer over $\left( \dfrac{-2\xi_{\max}^{-}}{1-(\xi_{\max}^{-})^{2}}, +\infty \right)$ is $\alpha_{T}^{*}=\mathrm{max}\lbrace \left(\xi_{\max}^{+}\right)^{-1},~-2\Theta^{-1}\rbrace$.
    \item[(b)] If $\xi_{\max}^{+}> 1$, then $\frac{-2\xi_{\max}^{-}}{1-(\xi_{\max}^{-})^{2}}<\alpha<\frac{2\xi_{\max}^{+}}{(\xi_{\max}^{+})^{2}-1}$. We have 
    \begin{align*}
        \rho\left(G\left(T;\alpha\right)\right)=
        \begin{cases}
            \Xi_{1}\left(\alpha\right),~\dfrac{-2\xi_{\max}^{-}}{1-(\xi_{\max}^{-})^{2}}<\alpha\le-2\Theta^{-1},\\[10pt]
            \Xi_{2}\left(\alpha\right),~-2\Theta^{-1}<\alpha<\dfrac{2\xi_{\max}^{+}}{(\xi_{\max}^{+})^{2}-1}.
        \end{cases}
    \end{align*}
    Analogously, $\alpha=\mathrm{max}\lbrace \left(\xi_{\max}^{+}\right)^{-1},~-2\Theta^{-1}\rbrace$ minimizes $\rho\left(G\left(T;\alpha\right)\right)$ in the interval $\left(\dfrac{-2\xi_{\max}^{-}}{1-(\xi_{\max}^{-})^{2}},\dfrac{2\xi_{\max}^{+}}{(\xi_{\max}^{+})^{2}-1}\right)$.
    \end{itemize}
    \end{itemize}
\end{proof}

\section{PLHSS Preconditioners and Preconditioned Krylov Subspace Methods}\label{sec:4}
The PLHSS iterative method (\ref{equ:Piter}) can be derived by matrix splitting   
\begin{align*}
	A = M\left(V;\alpha\right)-N\left(V;\alpha\right),
\end{align*}
where
\begin{align}\label{equ:preconditioner}
	M\left(V;\alpha\right)=\mathrm{i}T+\frac{\mathrm{i}}{\alpha}WV^{-1}T,
\end{align}
and
\begin{align*}
	N\left(V;\alpha\right)=-W+\frac{\mathrm{i}}{\alpha}WV^{-1}T.
\end{align*}
In addition, the splitting matrix $M\left(V;\alpha\right)$ can  serve as a preconditioner for solving the complex symmetric system (\ref{equ:system}), which is referred as the PLHSS preconditioner.  According to the analysis in the last section,  $W$ and $T$ can serve for $V$ in  $M\left(V;\alpha\right)$. Thereby, we  obtain two  preconditioners as follows
\begin{align}\label{equ:preconditioner,LHSS,W}
	\mathcal{P}_{PLW}:=M(W;\alpha)=\mathrm{i}\frac{\alpha+1}{\alpha}T,
\end{align}
\begin{align}\label{equ:preconditioner,LHSS,T}
	\mathcal{P}_{PLT}:=M(T;\alpha)=\mathrm{i}(T+\frac{\mathrm{1}}{\alpha}W).   
\end{align}
To gauge the performance of the above  two preconditioners, we  analyze the  eigenvalues distribution and eigenvectors of the preconditioned matrices as follows.
\begin{thm}
	Let $A=W+\mathrm{i}T$, with symmetric positive definite matrix $W\in\mathbb{R}^{n\times n}$ and symmetric indefinite matrix $T\in\mathbb{R}^{n\times n}$. Let $\alpha$ be a positive constant. Define $Z=W^{\frac{1}{2}}T^{-1}W^{\frac{1}{2}}$. Denote by $\xi_{1},~\xi_{2},\cdots,\xi_{n}$ the eigenvalues of the symmetric matrix $Z\in\mathbb{R}^{n\times n}$, and by $q_{1},~q_{2},\cdots,q_{n}$ the corresponding orthogonal eigenvectors. Then the eigenvalues of the matrix $\mathcal{P}_{PLW}^{-1}A$ are given by
	\begin{align*}
		\eta_{W,j}=\frac{\alpha}{\alpha+1}\left(1-\mathrm{i}\xi_{j}\right),~j=1,2,\cdots,n,
	\end{align*}
	and the corresponding eigenvectors are given by
	\begin{align*}
		x_{W,j} = W^{-\frac{1}{2}}q_{j},~j=1,2,\cdots,n.
	\end{align*}
	Therefore, it holds that $\mathcal{P}_{PLW}^{-1}A=X_{W}\Lambda_{W}X_{W}^{-1}$, where $X_{W}=(x_{W,1},x_{W,2},\cdots,x_{W,n})\in\mathbb{R}^{n\times n}$ and $\Lambda_{W}=(\eta_{W,1},\eta_{W,2},\cdots,\eta_{W,n})\in\mathbb{C}^{n\times n}$, with $\kappa_{2}\left(X_{W}\right)=\sqrt{\kappa_{2}\left(W\right)}$.
\end{thm}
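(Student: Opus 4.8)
The plan is to reduce the preconditioned matrix to a simple polynomial in $T^{-1}W$ and then exploit a symmetrizing similarity. First I would substitute the explicit form of the preconditioner. Since $\mathcal{P}_{PLW}=\mathrm{i}\frac{\alpha+1}{\alpha}T$, its inverse is $\frac{\alpha}{\alpha+1}(-\mathrm{i})T^{-1}$, so a direct computation gives
\begin{align*}
\mathcal{P}_{PLW}^{-1}A=\frac{\alpha}{\alpha+1}(-\mathrm{i})T^{-1}(W+\mathrm{i}T)=\frac{\alpha}{\alpha+1}\left(I-\mathrm{i}T^{-1}W\right).
\end{align*}
Thus the entire eigenstructure is governed by the real (but nonsymmetric) matrix $T^{-1}W$, and it remains to diagonalize this single matrix.

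The key step is to recognize that $T^{-1}W$ is similar to the symmetric matrix $Z=W^{\frac{1}{2}}T^{-1}W^{\frac{1}{2}}$ through the factor $W^{\frac{1}{2}}$, namely $T^{-1}W=W^{-\frac{1}{2}}ZW^{\frac{1}{2}}$. Because $T^{-1}$ and $W^{\frac{1}{2}}$ are real symmetric, $Z$ is real symmetric, hence admits real eigenvalues $\xi_{j}$ and an orthonormal eigenbasis $q_{1},\dots,q_{n}$; moreover $T^{-1}W$ and $Z$ share the spectrum $\lbrace\xi_{j}\rbrace$. I would then verify that $x_{W,j}=W^{-\frac{1}{2}}q_{j}$ is an eigenvector of $T^{-1}W$: applying $T^{-1}W=W^{-\frac{1}{2}}ZW^{\frac{1}{2}}$ to $W^{-\frac{1}{2}}q_{j}$ yields $W^{-\frac{1}{2}}Zq_{j}=\xi_{j}W^{-\frac{1}{2}}q_{j}$. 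Feeding this into the displayed expression gives
\begin{align*}
\mathcal{P}_{PLW}^{-1}A\,x_{W,j}=\frac{\alpha}{\alpha+1}\left(1-\mathrm{i}\xi_{j}\right)x_{W,j}=\eta_{W,j}\,x_{W,j},
\end{align*}
which establishes both the eigenvalues and the eigenvectors, and hence the diagonalization $\mathcal{P}_{PLW}^{-1}A=X_{W}\Lambda_{W}X_{W}^{-1}$ with $X_{W}=W^{-\frac{1}{2}}Q$, where $Q=(q_{1},\dots,q_{n})$ is orthogonal.

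Finally, for the conditioning claim I would compute the singular values of $X_{W}=W^{-\frac{1}{2}}Q$. Since $Q$ is orthogonal, $X_{W}^{T}X_{W}=Q^{T}W^{-1}Q$ is orthogonally similar to $W^{-1}$, so the singular values of $X_{W}$ are exactly $\lambda_{j}^{-1/2}$. Hence $\|X_{W}\|_{2}=\lambda_{\min}^{-1/2}$ and $\|X_{W}^{-1}\|_{2}=\lambda_{\max}^{1/2}$, giving $\kappa_{2}(X_{W})=\sqrt{\lambda_{\max}/\lambda_{\min}}=\sqrt{\kappa_{2}(W)}$.

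I expect the only genuinely delicate point to be the symmetrization step: it is what converts the nonsymmetric $T^{-1}W$ into a matrix with a guaranteed orthonormal eigenbasis and real spectrum, and it is also what forces the eigenvector matrix into the form $W^{-\frac{1}{2}}Q$ that produces the clean condition-number identity. Everything else is routine linear algebra following directly from the earlier setup.
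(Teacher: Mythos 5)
Your proposal is correct and follows essentially the same route as the paper's proof: both compute $\mathcal{P}_{PLW}^{-1}A=\frac{\alpha}{\alpha+1}\left(I-\mathrm{i}T^{-1}W\right)$, symmetrize $T^{-1}W$ via the similarity $W^{-\frac{1}{2}}ZW^{\frac{1}{2}}$ with the orthogonal diagonalization $Z=QDQ^{T}$, and obtain $\kappa_{2}(X_{W})=\sqrt{\kappa_{2}(W)}$ from $X_{W}=W^{-\frac{1}{2}}Q$. The only cosmetic differences are that you verify the eigenrelation vector by vector and compute the condition number through the singular values of $X_{W}$, whereas the paper manipulates the full matrix factorization and the norms $\lVert W^{-\frac{1}{2}}Q\rVert_{2}$, $\lVert Q^{T}W^{\frac{1}{2}}\rVert_{2}$ directly.
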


\begin{thm}\label{thm:PPLT}
	Let $A=W+\mathrm{i}T$, with symmetric positive definite matrix $W\in\mathbb{R}^{n\times n}$ and symmetric indefinite matrix $T\in\mathbb{R}^{n\times n}$. Let $\alpha$ be a positive constant such that the matrix $\alpha T+W$ is positive definite. Define $Z^{(\alpha)}=\left(\alpha T+W\right)^{-\frac{1}{2}}\left(T-\alpha W\right)\left(\alpha T+W\right)^{-\frac{1}{2}}$. Denote by $\tau_{1},~\tau_{2},\cdots,\tau_{n}$ the eigenvalues of the symmetric matrix $Z^{(\alpha)}\in\mathbb{R}^{n\times n}$, and by $p_{1},~p_{2},\cdots,p_{n}$ the corresponding orthogonal eigenvectors. Then the eigenvalues of the matrix $\mathcal{P}_{PLT}^{-1}A$ are given by
	\begin{align*}
		\eta_{T,j}=\frac{\alpha}{\alpha+\mathrm{i}}\left(1+\mathrm{i}\tau_{j}\right),~j=1,2,\cdots,n,
	\end{align*}
	and the corresponding eigenvectors are given by
	\begin{align*}
		x_{T,j} = \left(\alpha T+W\right)^{-\frac{1}{2}}p_{j},~j=1,2,\cdots,n.
	\end{align*}
	Therefore, it holds that $\mathcal{P}_{PLT}^{-1}A=X_{T}\Lambda_{T}X_{T}^{-1}$, where $X_{T}=(x_{T,1},x_{T,2},\cdots,x_{T,n})\in\mathbb{R}^{n\times n}$ and $\Lambda_{T}=(\eta_{T,1},\eta_{T,2},\cdots,\eta_{T,n})\in\mathbb{C}^{n\times n}$, with $\kappa_{2}\left(X_{T}\right)=\sqrt{\kappa_{2}\left(\alpha T+W\right)}$.
\end{thm}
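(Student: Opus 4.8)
The plan is to reduce $\mathcal{P}_{PLT}^{-1}A$ to an affine function of the real symmetric matrix $Z^{(\alpha)}$ through a real symmetric congruence, after which the eigenpairs can be read off directly. First I would rewrite the preconditioner compactly as $\mathcal{P}_{PLT} = \frac{\mathrm{i}}{\alpha}(\alpha T + W)$, so that $\mathcal{P}_{PLT}^{-1} = -\mathrm{i}\alpha(\alpha T + W)^{-1}$ and hence
\[
\mathcal{P}_{PLT}^{-1}A = -\mathrm{i}\alpha(\alpha T + W)^{-1}(W + \mathrm{i}T).
\]
Because $\alpha T + W$ is positive definite by hypothesis, its symmetric positive definite square root $S = (\alpha T + W)^{\frac{1}{2}}$ exists. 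Conjugating by $S$ and using $(\alpha T + W)^{-1} = S^{-2}$ yields
\[
S\,\mathcal{P}_{PLT}^{-1}A\,S^{-1} = -\mathrm{i}\alpha\,S^{-1}(W + \mathrm{i}T)\,S^{-1}.
\]

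The key step is to express the complex matrix $W + \mathrm{i}T$ as a complex-coefficient linear combination of the two real symmetric building blocks $\alpha T + W$ and $T - \alpha W$. Solving the resulting $2\times 2$ coefficient system gives
\[
W + \mathrm{i}T = a(\alpha T + W) + b(T - \alpha W), \quad a = \frac{1 + \alpha\mathrm{i}}{\alpha^2 + 1}, \quad b = \frac{\mathrm{i} - \alpha}{\alpha^2 + 1}.
\]
Sandwiching between $S^{-1}$ on both sides, and using $S^{-1}(\alpha T + W)S^{-1} = I$ together with the definition $Z^{(\alpha)} = S^{-1}(T - \alpha W)S^{-1}$, I obtain $S^{-1}(W + \mathrm{i}T)S^{-1} = aI + bZ^{(\alpha)}$, so that
\[
S\,\mathcal{P}_{PLT}^{-1}A\,S^{-1} = -\mathrm{i}\alpha a\,I - \mathrm{i}\alpha b\,Z^{(\alpha)}.
\]
This is an affine function of the symmetric matrix $Z^{(\alpha)}$, hence it is diagonalized by the same orthogonal eigenvectors $p_j$, with eigenvalues $-\mathrm{i}\alpha a - \mathrm{i}\alpha b\,\tau_j$. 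A direct simplification using $\frac{\alpha}{\alpha + \mathrm{i}} = \frac{\alpha(\alpha - \mathrm{i})}{\alpha^2 + 1}$ shows $-\mathrm{i}\alpha a = \frac{\alpha}{\alpha+\mathrm{i}}$ and $-\mathrm{i}\alpha b = \frac{\alpha}{\alpha+\mathrm{i}}\mathrm{i}$, so each eigenvalue equals $\eta_{T,j} = \frac{\alpha}{\alpha + \mathrm{i}}(1 + \mathrm{i}\tau_j)$, as claimed.

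Undoing the congruence, the eigenvectors of $\mathcal{P}_{PLT}^{-1}A$ are $x_{T,j} = S^{-1}p_j = (\alpha T + W)^{-\frac{1}{2}}p_j$, giving the diagonalization $\mathcal{P}_{PLT}^{-1}A = X_T \Lambda_T X_T^{-1}$ with $X_T = S^{-1}P$, where $P = (p_1,\ldots,p_n)$ is orthogonal. For the conditioning I would compute $X_T^{T} X_T = P^{T}(\alpha T + W)^{-1}P$, whose eigenvalues coincide with those of $(\alpha T + W)^{-1}$ since $P$ is orthogonal; the singular values of $X_T$ are therefore the reciprocals of the square roots $\sqrt{\lambda_i(\alpha T + W)}$, whence $\kappa_2(X_T) = \sqrt{\lambda_{\max}(\alpha T + W)/\lambda_{\min}(\alpha T + W)} = \sqrt{\kappa_2(\alpha T + W)}$. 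The only genuinely delicate point is identifying the correct linear combination in the key step and checking that the affine eigenvalues collapse to the stated closed form; the remainder mirrors the companion argument for $\mathcal{P}_{PLW}$ and is routine bookkeeping.
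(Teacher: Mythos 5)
Your proposal is correct and follows essentially the same route as the paper: the complex linear combination $W+\mathrm{i}T = a(\alpha T+W)+b(T-\alpha W)$ you solve for is exactly the paper's identity $W+\mathrm{i}T = \tfrac{1}{1-\mathrm{i}\alpha}\left[(\alpha T+W)+\mathrm{i}(T-\alpha W)\right]$, and the congruence by $(\alpha T+W)^{\pm\frac{1}{2}}$, the diagonalization via the orthogonal matrix $P$, and the condition-number computation using orthogonality of $P$ all mirror the paper's argument (the paper merely decomposes before conjugating, and bounds $\lVert X_T\rVert_2$, $\lVert X_T^{-1}\rVert_2$ separately instead of forming $X_T^{T}X_T$). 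No gaps; the steps and conclusions agree with the paper's appendix proof.
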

The proofs of the above two theorems can be found in the appendix.

\begin{remark}
	Actually, we could also deduce the relationship between $\xi_{j}$ and $\tau_{j}$ through straightforward calculation. According to the definition of eigenvalues and eigenvectors, 
	\begin{align*}
		\left(\alpha T+W\right)^{-1}\left(T-\alpha W\right)\left(\alpha T+W\right)^{-\frac{1}{2}}p_{j} = \tau_{j}\left(\alpha T+W\right)^{-\frac{1}{2}}p_{j}.
	\end{align*}
	Then, with the notation $\tilde{p}_{j}=\left(\alpha T+W\right)^{-\frac{1}{2}}p_{j}$
	\begin{align*}
		\left(1-\alpha \tau_{j}\right) T \tilde{p}_{j} = \left(\alpha+\tau_{j}\right) W\tilde{p}_{j},
	\end{align*}
	which implies that $\frac{1-\alpha \tau_{j}}{\alpha+\tau_{j}}$ is expressed as the eigenvalues of $T^{-1}W$. $T^{-1}W$ has the same eigenvalues with the matrix $Z$ denoted in Theorem 4.1, so we conclude that
	\begin{align*}
		\xi_{j} = \frac{1-\alpha \tau_{j}}{\alpha+\tau_{j}},
	\end{align*}
	equally,
	\begin{align*}
		\tau_{j} = \frac{1-\alpha \xi_{j}}{\alpha + \xi_{j}}.
	\end{align*}
	Therefore, the eigenvalues of $\mathcal{P}_{PLT}^{-1}A$ could be written as 
	\begin{align*}
		\eta_{T,j}=\frac{\alpha}{\alpha+\xi_{j}}\left(1-\mathrm{i}\xi_{j}\right),~j=1,2,\cdots,n.
	\end{align*}
\end{remark}

\begin{remark}
	In fact, $\alpha\in\left(0,-\dfrac{\lambda_{\min}}{\mu_{1}}\right)$ can reasonably serve as the parameter selection in Theorem \ref{thm:PPLT}. Denote any eigenvalue of $\alpha T +W$ as $\lambda$, We have that
	\begin{align*}
		0 =  -\frac{\lambda_{\min}}{\mu_{1}}\mu_{1} + \lambda_{\min} \le \alpha \mu_{1} +\lambda_{\min} \le 
		\lambda.
	\end{align*}	
	This indicates that $\alpha T+W$ is positive definite, and thus $\left(\alpha T+W\right)^{\frac{1}{2}}$ exists.
\end{remark}

\begin{remark}
	\cite{BaiBook} It is well-known that the convergence rate of GMRES is intrinsically related to the eigenvector conditioning of the coefficient matrix.  Denoted by $x_{0}$ the initial guess, $r_{0} = b-Ax_{0}$ is the initial residual. Let $\mathcal{P}_{k}$ be the  set of polynomials with degree at most k.  For a diagonalizable matrix $A = X\Lambda X^{-1}$, with 
    $\Lambda = \mathrm{diag}(\lambda_{1},\cdots, \lambda_{n})$,   the k-th step iterative residual of the GMRES method satisfies
	\begin{align*}
		\frac{\lVert b-A x^{(k)}\rVert_{2}}{\lVert r_{0}\rVert_{2}}\le \kappa_{2}\left(X\right) \mathop{min}_{q\in\mathcal{P}_{k},q\left(0\right)=1}\mathop{max}_{1\le i\le n}|q(\lambda_{i})|,
	\end{align*}
	where $\kappa_{2}(X) = \Vert X \Vert_{2}\Vert X^{-1} \Vert_{2}$ is the condition number of the eigenvector matrix. Consequently, it is important to enhance the performance of Krylov subspace iteration method by improving eigenvector conditioning as well as clustering eigenvalues of preconditioned matrix. 
\end{remark}

\section{Numerical examples}\label{sec:5}

In this section, we test the performance of the proposed LHSS and PLHSS iterative methods through numerical experiments, comparing them with existing approaches using model problems derived from surface acoustic wave device simulations.    We further evaluated how does the PLHSS preconditioners work with GMRES and COCG iterative solvers. The numerical experiments are carried out using MATLAB R2024b on Intel(R) Core(TM) i7-8550U CPU @ 1.80GHz with Ubuntu 22.01.5 LTS operating system. In our implementations, the initial guess is chosen to be $x^{(0)}=0$, the iterative methods are carried out within $k_{\max}=500$ iterations until the relative residual reaches stopping criterion
\begin{align*}
	\mathrm{RES}:=\frac{\Vert b-A x\Vert_{2}}{\Vert b\Vert_{2}}\le 10^{-8}.
\end{align*}
The computational efficiency is quantified by the number of iterations ({\bf IT}) and the elapsed CPU time ({\bf CPU}). We denote the degrees of freedom of the linear system of equations as {\bf DOF}.

\begin{example}\label{exp:piezoelectric} In SAW simulation, the piezoelectric equation
	\begin{align*}
		\begin{cases}
			\nabla_{s}^{T}\left(c\nabla_{s}\mathbf{u}\right)-\omega^{2}\rho\mathbf{u}+\nabla_{s}^{T}\left(e^{T}\nabla\phi\right)=0\\
			\nabla\cdot\left(e\nabla_{s}\mathbf{u}\right)-\nabla\cdot\left(\varepsilon \nabla\phi\right)=0
		\end{cases}
	\end{align*}
is discretized by the standard finite element method.  The  linear system of equations
	\begin{align}\label{sys:ind}
		\left(K-\omega^{2}M\right)x=b,
	\end{align}
emerges,  where the stiffness matrix $K\in\mathbb{C}^{N\times N}$ is complex symmetric, and the mass matrix 
	 $M\in\mathbb{R}^{N\times N}$ is symmetric definite positive.  $\omega$ denotes angular frequency. The  indefiniteness of the coefficient matrix in (\ref{sys:ind}) intensifies as omega increases. 	
\end{example}
The model problem is discretized in various mesh sizes, leading to linear systems with different \textbf{DOF}.  The linear system  (\ref{exp:piezoelectric}) is solved by using LHSS iteration with parameters 
$\alpha=\alpha^{*},1$ as well as the PLHSS iteration methods (\ref{equ:PiterW}) and (\ref{equ:PiterT}) with  $\alpha=\alpha_{W}^{*},\alpha_{T}^{*},1$ respectively.  
Next, we verify the correctness of the optimal iterative parameter theory presented in the Remark \ref{rmk:optalpW} and Theorem \ref{thm:optalphaT}. In  Figure \ref{fig:Iter_methods}, we plot the number of iterations required for the iterative methods to meet the convergence condition with different parameters. Here, a smaller number of iterations indicates that the chosen $\alpha$  performs better. We label the theoretical optimal parameter $\alpha_{W}^{*}$ and $\alpha_{T}^{*}$ calculated according to Remark \ref{rmk:optalpW} and Theorem \ref{thm:optalphaT} with red stars.
We notice that the iterations remain virtually insensitive within a broad neighborhood of the optimal parameter. Numerical experiments demonstrate that the PLHSS iteration method exhibits robust performance with respect to parameter variations, maintaining stable convergence rates across a wide range of parameter values.
\begin{figure}[htbp]
%\centering
\raggedright
\subfigure[Iterations with $V=W$ when $N=74124$.]
{
	\begin{minipage}[h]{0.45\linewidth}
		\centering
		\includegraphics[width=2.7in]{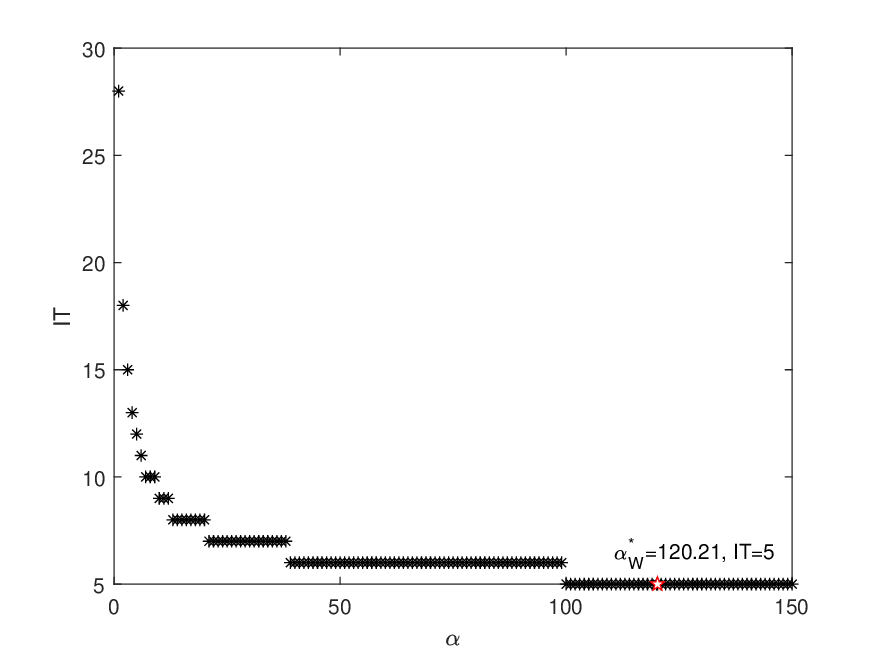}\label{fig:IT-W-74124}
\end{minipage}}
\subfigure[Iterations with $V=W$ when $N=130500$.]{
	\begin{minipage}[h]{0.45\linewidth}
		\centering
		\includegraphics[width=2.7in]{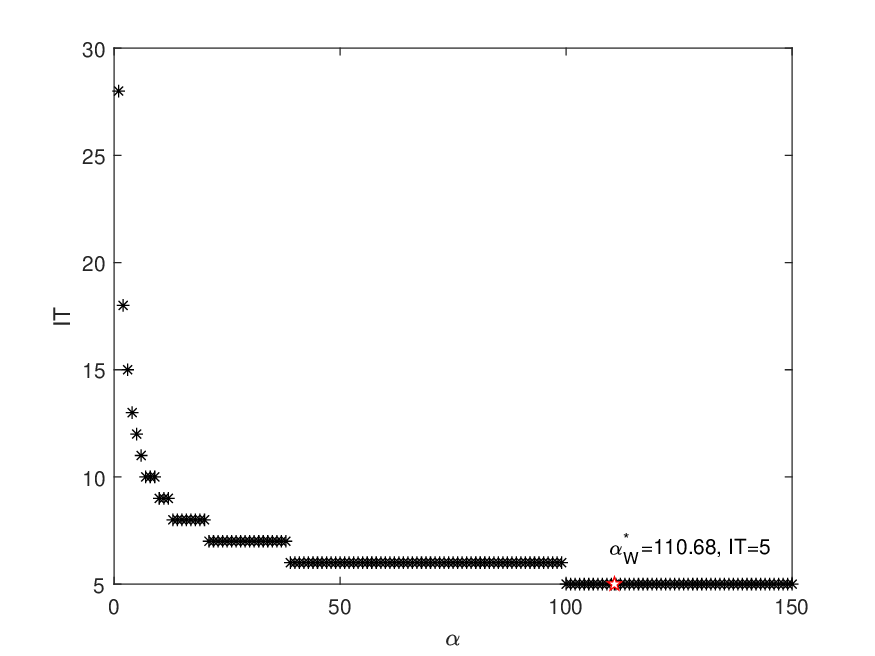}\label{fig:IT-W-130500}
	\end{minipage}}
\subfigure[Iterations with $V=T$ when $N=74124$.]
{
	\begin{minipage}[h]{0.45\linewidth}
		\centering
		\includegraphics[width=2.7in]{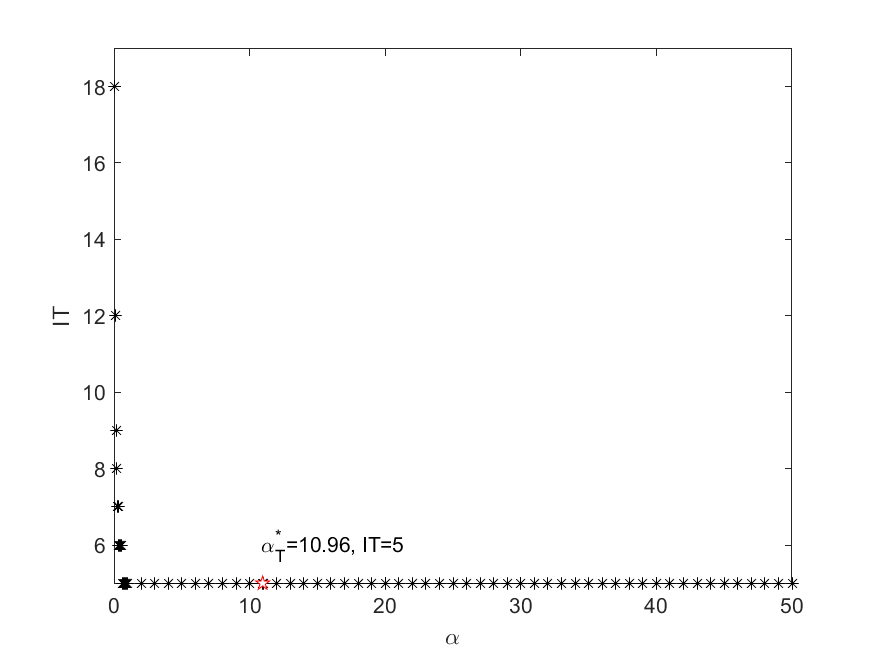}\label{fig:IT-T-74124}
\end{minipage}}
\subfigure[Iterations with $V=T$ when $N=130500$.]{
	\begin{minipage}[h]{0.45\linewidth}
		\centering
		\includegraphics[width=2.7in]{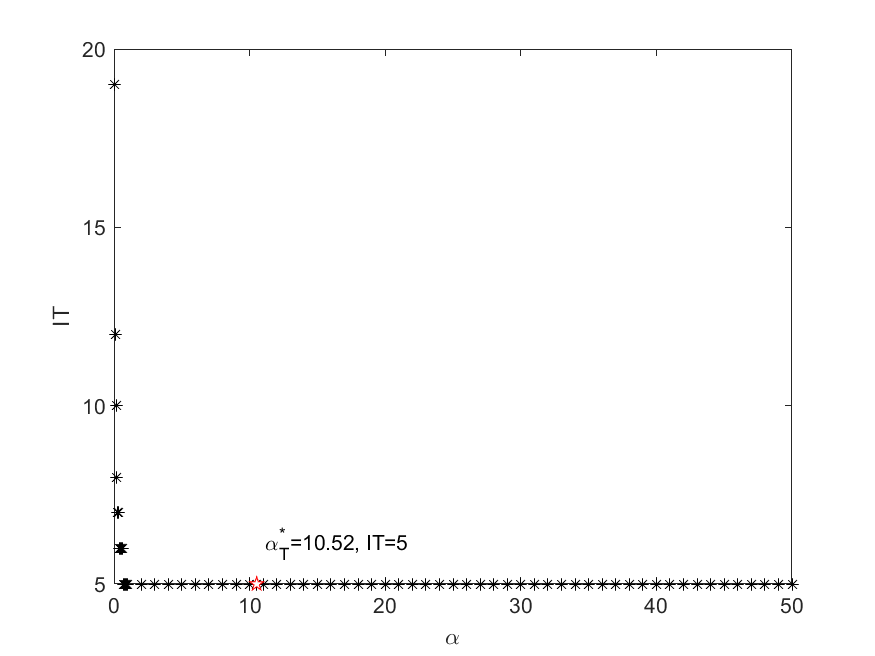}\label{fig:IT-T-130500}
	\end{minipage}
}
\caption{Numbers of iterations versus variable values of $\alpha$ for PLHSS iteration method  when $\omega=8\times 10^{9}$.}\label{fig:Iter_methods}
\end{figure}

We present in Table \ref{TAB:Iterative-LHSS} the number of iterations and CPU time required for solving (\ref{exp:piezoelectric}) using  LHSS, PLHSS,  PMHSS \cite{PMHSS}, HSS \cite{HSS}, and LPMHSS \cite{LPMHSS} iteration methods with variable parameters. It should be noted that although the LHSS has theoretical convergence guarantees for solving indefinite linear systems, when the system is ill conditioned, the  parameter $\alpha$  needs to be chosen as small as in the order of $10^{-9}$, and the convergence rate becomes sluggish, failing to achieve a qualified approximate solution within a limited number of iterations. This also highlights the necessity of the PLHSS iteration method. As the angular frequency $\omega$ increases, the indefiniteness of the matrix $K-\omega^{2}M$ progressively intensifies. Nevertheless, the PLHSS solver maintains stable iteration counts and CPU times when solving (\ref{exp:piezoelectric}), demonstrating remarkable robustness against increasing indefiniteness. When employing $T$ as a preconditioner, the PLHSS iterative method achieves convergence within 5 iterations across all test cases. 
It should be noted that the HSS method theoretically converges when solving linear systems with a positive definite Hermitian part, whereas PMHSS and LPMHSS lack theoretical convergence guarantees. Nevertheless, PMHSS still converges in this example. However, both HSS and LPMHSS fail to achieve convergence within the specified number of iterations. Therefore, we don't list the numerical data of them in Table \ref{TAB:Iterative-LHSS}.   In Figure \ref{fig:Iter_numbers}, we plot the curves of relative residuals versus iteration steps for various methods.
The PLHSS iteration methods converge significantly faster than the PMHSS iteration method. The relative residual of the HSS iteration method decreases, but at a very slow rate. In contrast, the relative residual of the LPMHSS iteration method increases. LPMHSS iteration method diverges for this indefinite system.
\begin{table}[htbp]
	\centering
	\caption{IT and CPU(in parentheses) of LHSS,PLHSS and PMHSS iteration methods. }
	\label{TAB:Iterative-LHSS}
	\scriptsize
	\begin{tabular}{cccllll}
		\toprule
	{\multirow{2}{*}{DOF}}&{\multirow{2}{*}{Method}}&&\multicolumn{1}{l}{$\omega$}\\ \cline{4-7}
	&&&$4\times 10^8$&$8\times 10^8$&$4\times 10^{9}$&$8\times 10^{9}$\\ \hline
	{\multirow{7}{*}{74124}}	&	{\multirow{2}{*}{LHSS}}&	$\alpha^{*}$ & $1.01e-8$ & $6.12e-9$ & $9.32e-8$ &	$8.85e-7$ \\
    &&& 500(-) & 500(-)	&	500(-)	&	500(-)\\ \cline{2-7}
	&	{\multirow{2}{*}{PLHSS}}&	$\alpha_{W}^{*}$	&	$6.05e+4$	&$7.05e+2$ &$2.67e+1$ & $1.20e+2$ 	\\	
	&{\multirow{2}{*}{$V=W$}}	&	  &4(4.92)	&4(4.79)	&7(5.56) &5(5.16)  \\ \cline{3-7}
	&	&	 $\alpha=1$ &28(11.24)	&28(11.45)	&28(11.27) &28(12.93)  \\	\cline{2-7}
	&		{\multirow{2}{*}{PLHSS}}&	$\alpha_{T}^{*}$	& $2.46e+2$ & $2.66e+1$ & $5.17$ & $1.10e+1$ \\	
	&  {\multirow{2}{*}{$V=T$}} &	  & 4(10.67)&4(10.89)	&5(11.89) &5(11.44) \\	\cline{3-7}
	&	&	 $\alpha=1$ & 4(11.16)&4(10.94)	&5(11.53) &5(11.54) \\ \cline{2-7}	
   &    PMHSS&  $\alpha_{opt}=1$	& 55(41.85) &55(43.98)&55(41.21)&55(55.90)	\\ 
		\bottomrule
	{\multirow{7}{*}{130500}}	&	{\multirow{2}{*}{LHSS}}&	$\alpha^{*}$	&	$4.31e-9$ &	$2.38e-9$	&	$1.57e-7$	&	$3.32e-6$	\\
    &&&	500(-) &	500(-)	&	500(-)	&	500(-)	\\	\cline{2-7}
	&	{\multirow{2}{*}{PLHSS}}&	$\alpha_{W}^{*}$	&	$6.09e+4$&$8.09e+2$ &$1.29e+2$ &$1.11e+2$  \\	
	&	{\multirow{2}{*}{$V=W$}}&	 &4(8.59)&4(8.42)	&5(9.47) &5(9.35) \\ \cline{3-7}
	&	&	 $\alpha=1$ &28(19.11)&28(20.15)	&28(20.29) &28(20.16) \\	\cline{2-7}
	&	{\multirow{2}{*}{PLHSS}}&	$\alpha_{T}^{*}$ & $2.47e+2$ & $2.84e+1$	& $1.14e+1$ & $1.05e+1$ \\	
	&{\multirow{2}{*}{$V=T$}}	&	 & 4(18.17)&4(18.00)	&4(18.00) &5(18.89) \\	\cline{3-7}
	&	&	 $\alpha=1$ & 4(18.05)&4(17.96)	&4(17.88) &5(18.75) \\	\cline{2-7}	
      &    PMHSS&  $\alpha_{opt}=1$	&55(64.45) &55(63.60)&55(65.54)&55(63.79)\\
		\bottomrule
	\end{tabular}
\end{table}

\begin{figure}[htbp]
%\centering
\raggedright
\subfigure[Relative residual when $N=74124$.]
{
	\begin{minipage}[h]{0.45\linewidth}
		\centering
		\includegraphics[width=2.7in]{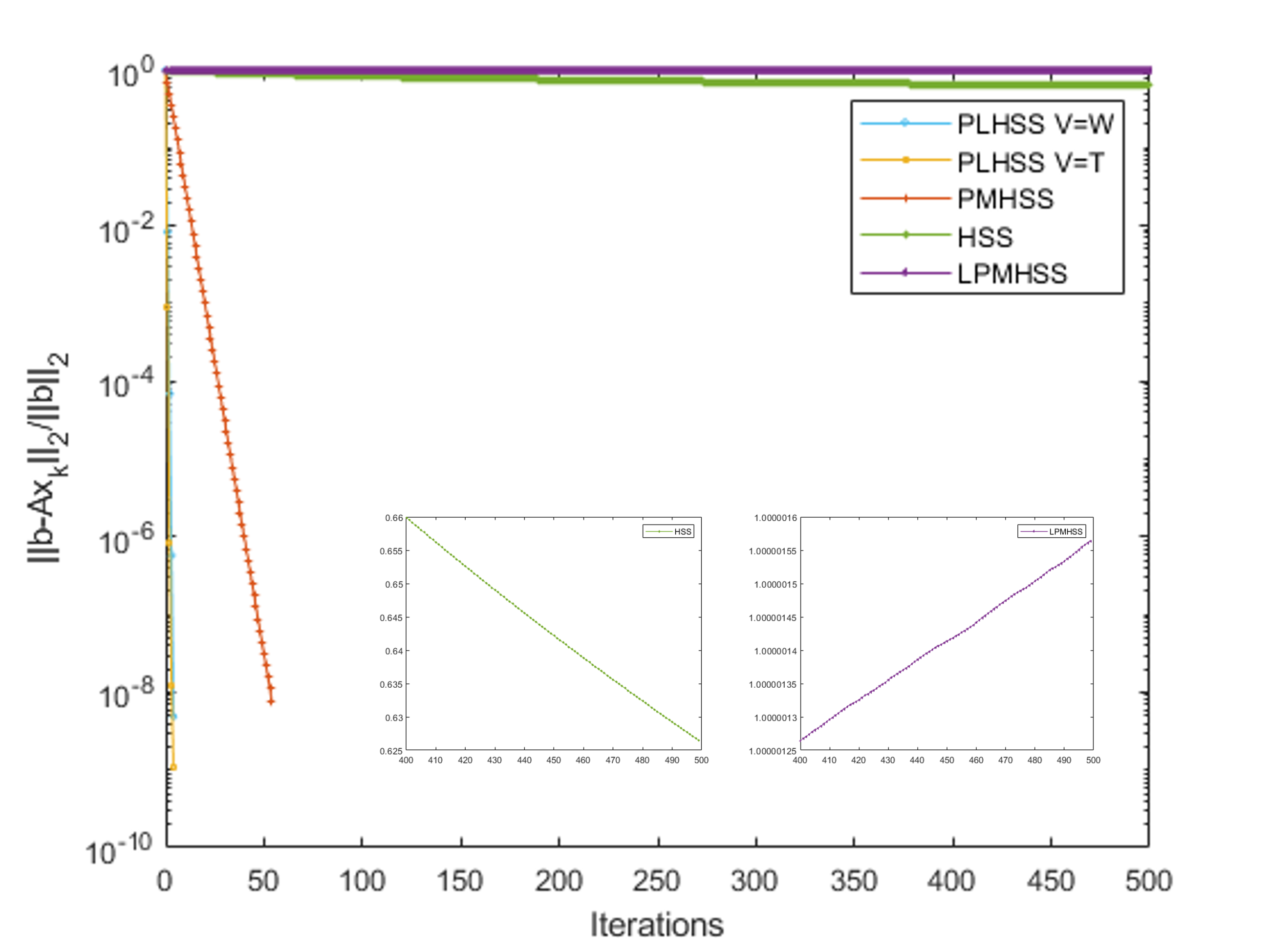}\label{fig:Iter74124}
\end{minipage}}
\subfigure[Relative residual when $N=130500$.]{
	\begin{minipage}[h]{0.45\linewidth}
		\centering
		\includegraphics[width=2.7in]{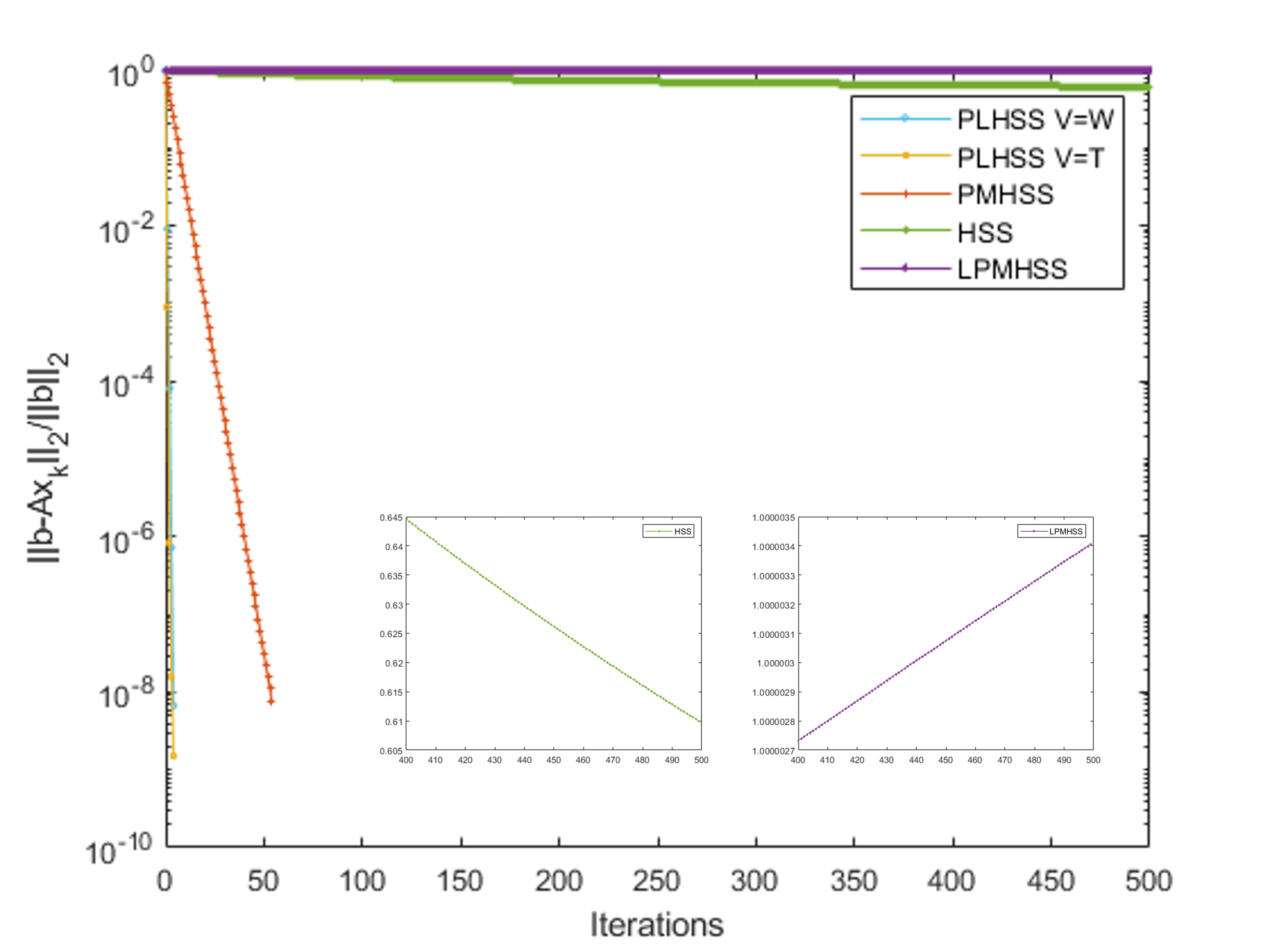}\label{fig:Iter130500}
	\end{minipage}
}
\caption{Relative residual versus number of iterations for different iteration methods  when $\omega=8\times 10^{9}$.}\label{fig:Iter_numbers}
\end{figure}

Next, we solve (\ref{exp:piezoelectric}) using the GMRES method with preconditioners 
$\mathcal{P}_{PLW}$ and $\mathcal{P}_{PLT}$, and compare their performance with several established preconditioners: the well-known HSS preconditioner  \cite{HSSpre}, the PMHSS preconditioner  \cite{PMHSS}, the LPMHSS preconditioner 
 \cite{LPMHSS}, and the C-to-R preconditioner  \cite{CtoR}
 \begin{equation}\label{preconditioner:CtoR}
     \mathcal{P}_{CtoR} =
     \begin{bmatrix}
         W & T\\
         -T & W+2T
     \end{bmatrix}.
 \end{equation}
 The  generalized residual equations in preconditioning process are solved by implementing  $LDL^{T}$ factorizations.  The number of iterations  and CPU time of the GMRES methods with different preconditioners  are listed in  Table \ref{TAB:Precond-Comparision}. It is worth noting that both the PMHSS preconditioner and the LPMHSS preconditioner are typically applied to linear systems with positive definite  non-Hermitian part, with little research exploring their application when non-Hermitian part is  indefinite. However, we have found that these preconditioners differ from the PLHSS preconditioner only by a constant factor.  This explains why their numerical performance is very similar when used as preconditioners.  Except for HSS-preconditioned GMRES,  discretizing mesh size and angular frequency have minimal impact on the convergence of other preconditioned GMRES solvers.  While the PLHSS preconditioned GMRES method achieves convergence with the fewest iterations, the C-to-R preconditioned GMRES method wins in terms of speed, reaching convergence faster. The difference between their convergence rates is marginal.  Unlike the nonsymmetric  $\mathcal{P}_{CtoR} $ in (\ref{preconditioner:CtoR}), the PLHSS preconditioners $\mathcal{P}_{PLW}$ and $\mathcal{P}_{PLT}$  are   multiple  of a symmetric positive definite matrix. Therefore, they can serve as the preconditioners for the  COCG method, which is the three-term recurrence Krylov subspace method for  solving the complex symmetric linear system of equations.

\begin{table}[htbp]
	\centering
	\caption{IT and CPU(in parentheses) of preconditioned GMRES method with different preconditioners.}
	\label{TAB:Precond-Comparision}
	\footnotesize
	\begin{tabular}{clllllll}
		\toprule
		{\multirow{2}{*}{DOF}}&{\multirow{2}{*}{Precond.}}&\multicolumn{1}{l}{$\omega$}\\ \cline{3-8}
		&&$4\times 10^8$&$8\times 10^8$&$4\times 10^{9}$&$8\times 10^{9}$&$4\times 10^{10}$&$8\times 10^{10}$\\ \hline
		{\multirow{6}{*}{74124}} 
		&$\mathcal{P}_{HSS}$& 401(683.37)&359(568.92) &  280(430.21)  & 244(331.11) & 208(317.34) & 195(286.78)\\
		&$\mathcal{P}_{PMHSS}$&  4(5.94)& 5(7.06)& 6(7.36)  & 6(7.38) & 15(11.36) & 14(10.93)\\
		&$\mathcal{P}_{LPMHSS}$&   4(5.83) & 5(6.74)&  6(7.12) & 6(7.22) & 15(11.09) & 14(10.62) \\
		&$\mathcal{P}_{CtoR}$&   3(4.22)& 4(5.19) &  5(5.38) & 4(5.04) & 8(5.93) & 7(5.71) 	\\
		&$\mathcal{P}_{PLW}$&  3(5.27)& 4(6.17)&  4(6.14) & 4(6.20) & 7(7.50) & 7(7.51) \\
		&$\mathcal{P}_{PLT}$&  3(5.32)& 4(6.29) & 4(6.10)& 4(6.20) & 7(7.40) & 7(7.48) \\\hline
		{\multirow{5}{*}{130500}} 
		&$\mathcal{P}_{HSS}$& 500(-)&	500(-) & 500(-)& 500(-) &500(-) & 500(-)\\
		&$\mathcal{P}_{PMHSS}$& 4(11.74)&5(12.74) & 6(13.59) & 6(13.32) & 14(19.89) & 18(23.01) \\
		&$\mathcal{P}_{LPMHSS}$& 4(11.90)&5(11.69) & 6(13.48) & 6(13.45) & 14(19.83) & 18(23.85) \\
		&$\mathcal{P}_{CtoR}$& 3(9.10)& 4(9.78) & 5(10.11) & 8(11.33) & 8(11.33) & 10(11.81) \\
		&$\mathcal{P}_{PLW}$& 3(11.10)& 4(11.81) & 4(12.02) & 7(14.13) & 7(14.13) & 8(14.58) \\
		&$\mathcal{P}_{PLT}$& 3(11.10)& 4(11.86) & 4(11.84) & 7(14.45) & 7(14.45) & 8(14.59) \\
		\bottomrule
	\end{tabular}
\end{table}
We adopt the short-recurrence method COCG \cite{COCG} to a system with $\mathbf{DOF}=208170$ to evaluate the applicability of the preconditioners. In Table \ref{TAB:GMRES-COCG}, iterations and CPU time of PLHSS preconditioned COCG method  and C-to-R preconditioned GMRES method are listed.  The C-to-R preconditioner exhibits a residual stagnation, which may prevent it from achieving the desired accuracy. Upon examining the code, it turns out that the orthogonality of the basis functions of the Krylov subspace has been compromised. In figure \ref{fig:Compare}, we plot the curves of relative residuals versus iteration steps for PLHSS-preconditioned COCG method and GMRES method preconditioned with $\mathcal{P}_{CtoR}$ when $N=208170$. We observe that the PLHSS preconditioner is also effective in short-recurrence Krylov subspace methods.
\begin{table}[htbp]
	\centering
	\caption{IT and CPU(in parentheses) for solving (\ref{exp:piezoelectric}) by preconditioned GMRES method with different preconditioners.}
	\label{TAB:GMRES-COCG}
	\footnotesize
	\begin{tabular}{cllll|ll}
		\toprule
·		{\multirow{2}{*}{DOF}}&{\multirow{2}{*}{Method}}&{\multirow{2}{*}{Precond.}}& \multicolumn{2}{l}{$\text{tol}=10^{-8}$} &\multicolumn{2}{l}{$\text{tol}=10^{-10}$} \\ \cline{4-7}
		&&&$\omega=8\times 10^8$&$\omega=8\times 10^{10}$&$\omega=8\times 10^8$&$\omega=8\times 10^{10}$\\ \hline
		{\multirow{3}{*}{208170}} 
		&GMRES&$\mathcal{P}_{CtoR}$& 4(21.82)& 12(27.39)&  500(-) & 500(-)	\\
        &{\multirow{2}{*}{COCG}}&$\mathcal{P}_{PLW}$& 3(18.13)& 10(27.33) &  3(18.83) & 15(32.14) 	\\
		&&$\mathcal{P}_{PLT}$& 3(19.89)& 11(27.14) &  3(19.28) & 15(32.30) 	\\
		\bottomrule
	\end{tabular}
\end{table}

\begin{figure}[htbp]
%\centering
\raggedright
\subfigure[$\text{tol}=10^{-8}$, $\omega=8\times10^{8}$.]
{
	\begin{minipage}[h]{0.45\linewidth}
		\centering
		\includegraphics[width=2.7in]{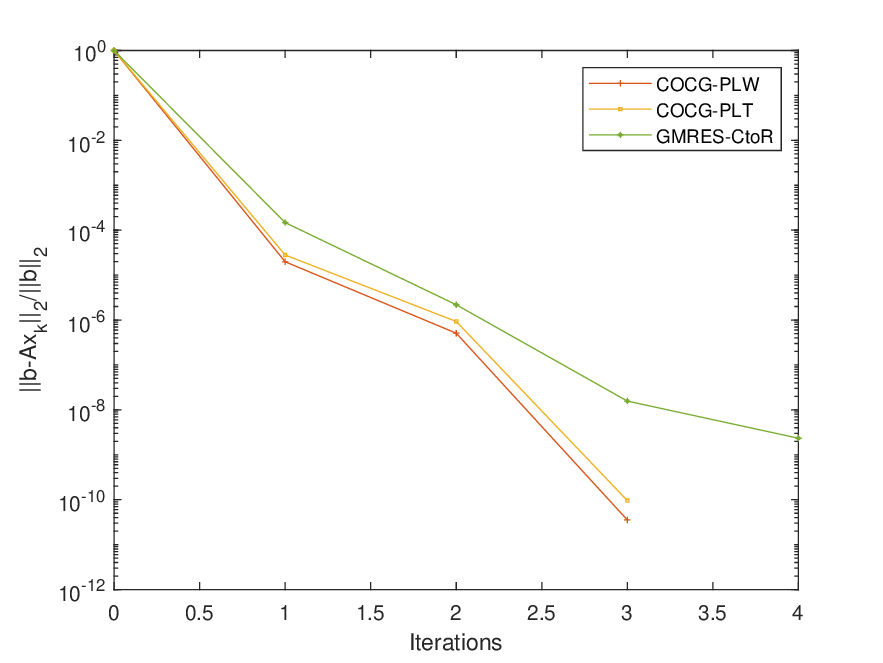}\label{fig:1e8_w1}
\end{minipage}}
\subfigure[$\text{tol}=10^{-8}$, $\omega=8\times10^{10}$.]{
	\begin{minipage}[h]{0.45\linewidth}
		\centering
		\includegraphics[width=2.7in]{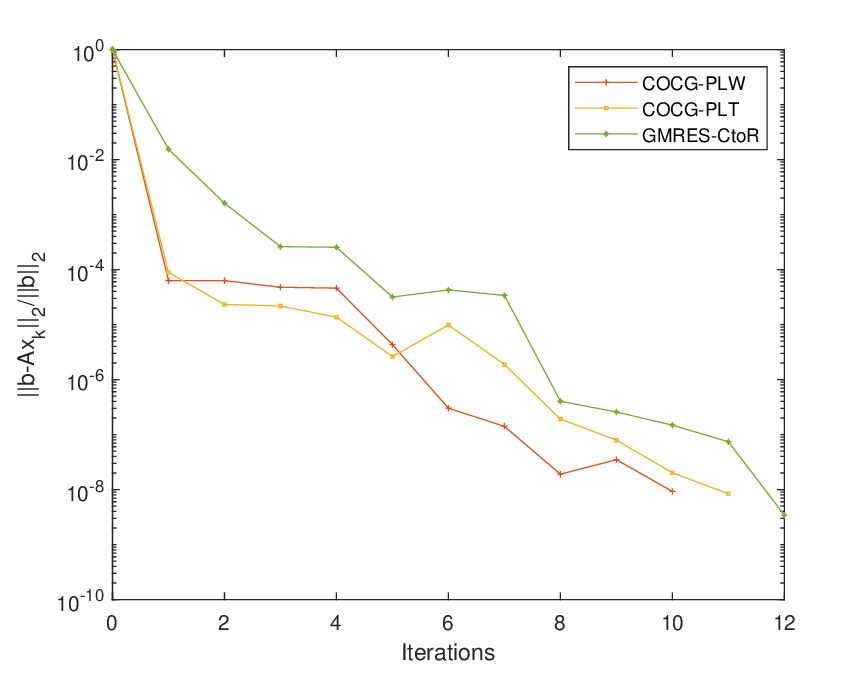}\label{fig:1e8_w2}
	\end{minipage}}
\subfigure[$\text{tol}=10^{-10}$, $\omega=8\times10^{8}$.]
{
	\begin{minipage}[h]{0.45\linewidth}
		\centering
		\includegraphics[width=2.7in]{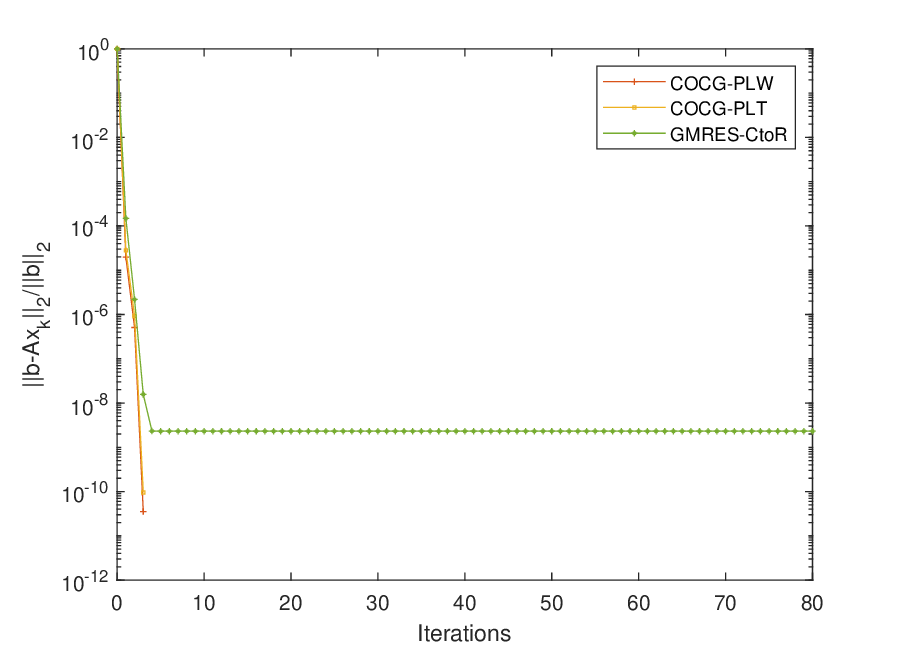}\label{fig:1e10_w1}
\end{minipage}}
\subfigure[$\text{tol}=10^{-10}$, $\omega=8\times10^{10}$.]{
	\begin{minipage}[h]{0.45\linewidth}
		\centering
		\includegraphics[width=2.7in]{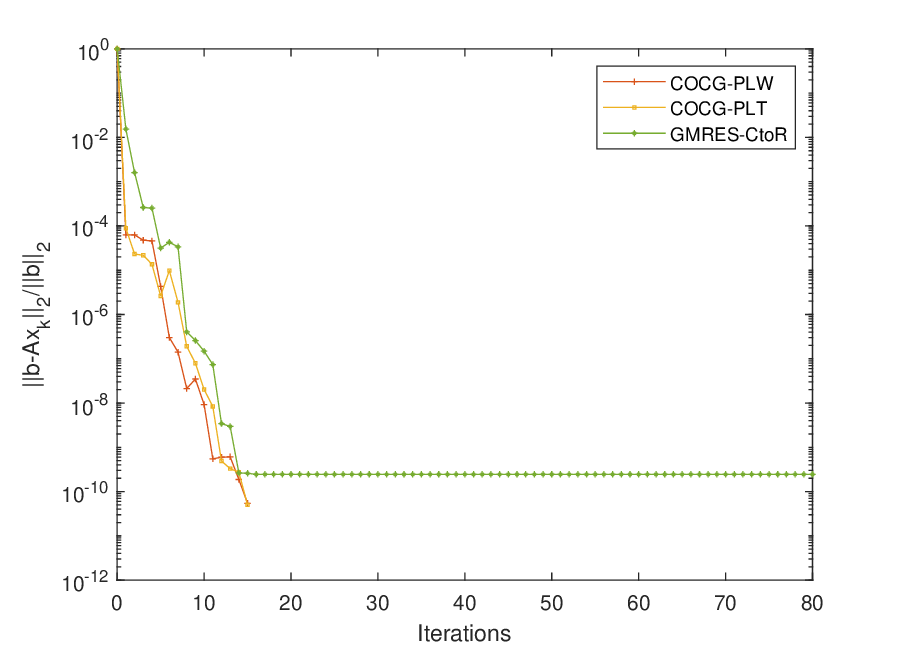}\label{fig:1e10_w2}
	\end{minipage}
}
\caption{Relative residual versus number of iterations for different iterative solvers when $N=208170$.}\label{fig:Compare}
\end{figure}

\section{Conclusion}

This study introduces a type of splitting iterative methods  for solving the  complex symmetric linear system of equations, along with its preconditioned Krylov subspace variants. Theoretical analysis guarantees  that the methods is  convergent  when the imaginary part of the complex symmetric coefficient matrix is indefinite—a challenging scenario. Numerical tests reveal its  efficiency over existed splitting-based iterative methods.  The proposed matrix splitting method introduce the  efficient preconditioners. These preconditioners, being scalar multiples of real symmetric positive matrices,  integrate with short-recurrence Krylov solvers like COCG and COCR, making them ideal for large scale equation systems. The matrix splitting iteration methods can be also serve as the inner solver for complex symmetric indefinite saddle-point problems, demonstrating practical value in engineering applications such as modeling surface acoustic wave device equations.

\bibliography{references}

\newpage
\appendix

\section{Proof of Theorem 4.1}

\begin{proof}[proof]
		Define matrices
		\begin{align*}
			Q=\left(q_{1},q_{2},\cdots,q_{n}\right)\in\mathbb{R}^{n\times n}
		\end{align*}
		and
		\begin{align*}
			D=\text{diag}\left(\xi_{1},\xi_{2},\cdots,\xi_{n}\right)\in\mathbb{R}^{n\times n}.
		\end{align*}
		Then it holds that
		\begin{align*}
			Z = Q D Q^{T}.
		\end{align*}
		By straightforward computations we have
		\begin{align*}
			\mathcal{P}_{PLW}^{-1}A &= \frac{\alpha}{\mathrm{i}\left(\alpha+1\right)}T^{-1}\left(W+\mathrm{i}T\right)\\
			&=\frac{\alpha}{\mathrm{i}\left(\alpha+1\right)\left(-\mathrm{i}\right)}\left(I-\mathrm{i}T^{-1}W\right)\\
			&=\frac{\alpha}{\alpha+1}\left(W^{-\frac{1}{2}}W^{\frac{1}{2}}-\mathrm{i}T^{-1}W^{\frac{1}{2}}W^{\frac{1}{2}}\right)\\
			&=\frac{\alpha}{\alpha+1}W^{-\frac{1}{2}}\left(I-\mathrm{i}W^{\frac{1}{2}}T^{-1}W^{\frac{1}{2}}\right)W^{\frac{1}{2}}\\
			&=W^{-\frac{1}{2}}Q\left(\frac{\alpha}{\alpha+1}\left(I-\mathrm{i}D\right)\right)Q^{T}W^{\frac{1}{2}}\\
			&=X_{W}\Lambda_{W}X_{W}^{-1}.
		\end{align*}
		
		Hence, the eigenvalues of the matrix $\mathcal{P}_{PLW}^{-1}A$ are given by
		\begin{align*}
			\eta_{W,j}=\frac{\alpha}{\alpha+1}\left(1-\mathrm{i}\xi_{j}\right),~j=1,2,\cdots,n,
		\end{align*}
		and the corresponding eigenvectors are given by $x_{W,j}=W^{-\frac{1}{2}}q_{j},~j=1,2,\cdots,n.$
		
		Besides, as $Q\in\mathbb{R}^{n\times n}$ is orthogonal, we obtain
		\begin{align*}
			\left\lVert X_{W}\right\rVert_{2} = \left\lVert W^{-\frac{1}{2}}Q\right\rVert_{2}=\left\lVert W^{-1}\right\rVert_{2}^{\frac{1}{2}}
		\end{align*}
		and
		\begin{align*}
			\left\lVert X_{W}^{-1}\right\rVert_{2} = \left\lVert Q^{T}W^{\frac{1}{2}}\right\rVert_{2}=\left\lVert W\right\rVert_{2}^{\frac{1}{2}}.
		\end{align*}
		It then follows that
		\begin{align*}
			\kappa_{2}\left(X_{W}\right)=\left\lVert X_{W}\right\rVert_{2}\left\lVert X_{W}^{-1}\right\rVert_{2}=\left\lVert W^{-1}\right\rVert_{2}^{\frac{1}{2}}\left\lVert W\right\rVert_{2}^{\frac{1}{2}}=\sqrt{\kappa_{2}\left(W\right)}.
		\end{align*}
	\end{proof}

\section{Proof of Theorem 4.2}
\begin{proof}[proof]
		Define matrices
		\begin{align*}
			P=\left(p_{1},p_{2},\cdots,p_{n}\right)\in\mathbb{R}^{n\times n}
		\end{align*}
		and
		\begin{align*}
			D^{(\alpha)}=\text{diag}\left(\tau_{1},\tau_{2},\cdots,\tau_{n}\right)\in\mathbb{R}^{n\times n}.
		\end{align*}
		Then it holds that
		\begin{align*}
			Z^{(\alpha)} = P D^{(\alpha)} P^{T}.
		\end{align*}
		By straightforward computations we have
		\begin{align*}
			\mathcal{P}_{PLT}^{-1}A &= \frac{\alpha}{\mathrm{i}}\left(\alpha T+W\right)^{-1}\left(W+\mathrm{i}T\right)\\
			&=\frac{\alpha}{\mathrm{i}\left(1-\mathrm{i}\alpha\right)}\left(I+\mathrm{i}\left(\alpha T+W\right)^{-1}\left(T-\alpha W\right)\right)\\
			&=\frac{\alpha}{\alpha+\mathrm{i}}\left(\alpha T+W\right)^{-\frac{1}{2}}\left[I+\mathrm{i}\left(\alpha T+W\right)^{-\frac{1}{2}}\left(T-\alpha W\right)\left(\alpha T+W\right)^{-\frac{1}{2}}\right]\left(\alpha T+W\right)^{\frac{1}{2}}\\
			&=\left(\alpha T+W\right)^{-\frac{1}{2}}P\left[\frac{\alpha}{\alpha+\mathrm{i}}\left(I+\mathrm{i}D^{(\alpha)}\right)\right]P^{T}\left(\alpha T+W\right)^{\frac{1}{2}}  \\
			&=X_{T}\Lambda_{T}X_{T}^{-1}.
		\end{align*}
		
		Hence, the eigenvalues of the matrix $\mathcal{P}_{PLT}^{-1}A$ are given by
		\begin{align*}
			\eta_{T,j}=\frac{\alpha}{\alpha+\mathrm{i}}\left(1+\mathrm{i}\tau_{j}\right),~j=1,2,\cdots,n,
		\end{align*}
		and the corresponding eigenvectors are given by $x_{T,j}=\left(\alpha T+W\right)^{-\frac{1}{2}}p_{j},~j=1,2,\cdots,n.$
		
		Besides, as $P\in\mathbb{R}^{n\times n}$ is orthogonal, we obtain
		\begin{align*}
			\left\lVert X_{T}\right\rVert_{2} = \left\lVert \left(\alpha T+W\right)^{-\frac{1}{2}}P\right\rVert_{2}=\left\lVert \left(\alpha T+W\right)^{-1}\right\rVert_{2}^{\frac{1}{2}}
		\end{align*}
		and
		\begin{align*}
			\left\lVert X_{T}^{-1}\right\rVert_{2} = \left\lVert P^{T}\left(\alpha T+W\right)^{\frac{1}{2}}\right\rVert_{2}=\left\lVert \alpha T+W\right\rVert_{2}^{\frac{1}{2}}.
		\end{align*}
		It then follows that
		\begin{align*}
			\kappa_{2}\left(X_{T}\right)=\left\lVert X_{T}\right\rVert_{2}\left\lVert X_{T}^{-1}\right\rVert_{2}=\left\lVert \left(\alpha T+W\right)^{-1}\right\rVert_{2}^{\frac{1}{2}}\left\lVert \left(\alpha T+W\right)\right\rVert_{2}^{\frac{1}{2}}=\sqrt{\kappa_{2}\left(\alpha T+W\right)}.
		\end{align*}
	\end{proof}
\end{document}